\documentclass[a4paper,reqno]{amsart}
\usepackage{amssymb, amsmath, amscd}
\usepackage[final]{graphicx}
\usepackage{float}\usepackage{wrapfig}
\usepackage{color}
\usepackage{bbm}
\usepackage[final]{graphicx}
\usepackage{tikz-cd}
\usepackage{color}

\definecolor{brown(traditional)}{rgb}{0.59, 0.29, 0.0}

\def\Tr{\operatorname{Tr}}
\def\Gronk{[0,\iota_P)}
\def\rr{{\mathfrak{r}}}
\newtheorem{theorem}{Theorem}[section]

\newtheorem{lemma}[theorem]{Lemma}

\makeatletter\makeatother\makeatletter
 \@addtoreset{equation}{section}
\makeatother
\begin{document}
\title[Harmonic radial vector fields on harmonic spaces]{Harmonic radial vector fields\\ on harmonic spaces}
\author{P. B. Gilkey and J. H. Park}
\address{PG: Mathematics Department, University of Oregon, Eugene OR 97403-1222, USA}
\email{gilkey@uoregon.edu}
\address{JHP: Department of
Mathematics, Sungkyunkwan University, Suwon, 16419 Korea.}
\email{parkj@skku.edu}
\subjclass[2010]{53C21}
\keywords{harmonic spaces, density function, harmonic vector fields, radial eigen-spaces of the Laplacian}
\begin{abstract} We characterize harmonic spaces in terms of the dimensions of various spaces of radial
eigen-spaces of the Laplacian $\Delta^0$ on functions and the Laplacian $\Delta^1$ on 1-forms. We
examine the nature of the singularity as the geodesic
distance $r$ tends to zero of radial eigen-functions and 1-forms.
Via duality, our results give rise to corresponding results
for radial vector fields. Many of our results extend to the context of spaces which are harmonic with
respect to a single point.
\end{abstract}
\maketitle
\section{Introduction}

In Euclidean space $\mathbb{R}^m$,
there is a ``radial" solution to the Laplace equation $\Delta f = 0$ which is given by
setting $ f(x) = \|x\|^{2-m}$ for $m > 2$, and
$f(x)=\log \|x\|^2$ for $m=2$ where $\|x\|^2=(x^1)^2+\dots+(x^m)^2$
and where $\Delta=-\partial_{x^1}^2-\dots-\partial_{x^m}^2$.
Ruse \cite{R31} was the first to study this problem for an arbitrary
Riemannian manifold $\mathcal{M}=(M,g)$. Fix a point $P$ of $M$ and let $r_P(Q)$ be the
geodesic distance from a point $Q$ of $M$ to $P$. Let $\Delta=\delta d$ be the Laplace-Beltrami
operator. A function is said to be \emph{radial} if $f(Q)=f(r_P(Q))$. Let $\iota_P$ be the injectivity
radius. If there exists a radial function so that
$\Delta f=0$ for $0<r<\iota_P$, then $\mathcal{M}$ is said to be \emph{harmonic with respect to $P$}. If $\mathcal{M}$ is
harmonic with respect to every point, then $\mathcal{M}$ is said to be
a \emph{harmonic space} (see Willmore~\cite{W}). Let $\phi$ be a smooth positive function with $\phi(0)=0$.
The Riemannian metric $g_\phi:=e^{2\phi(\|x\|^2)}((dx^1)^2+\dots+(dx^m)^2)$ is harmonic with respect to the
origin but, for generic $\phi$, is not harmonic with respect to any other point, see Copson and Ruse~\cite{CR40}.
We will present other examples in Section~\ref{S1.7}.

In geodesic normal coordinates centered at $P$, we can express the Riemannian measure
$\operatorname{dvol}=\tilde\Theta_Pdx^1\dots dx^m$ where
$\tilde\Theta_P=\sqrt{\det(g_{ij})}$ is called the
\emph{volume density function} in geodesic normal coordinates; $\mathcal{M}$ is a
harmonic space if and only if $\tilde\Theta_P$ is radial for every point $P$ of $M$. In this setting,
the volume density
function is independent of the particular point chosen. The study of harmonic spaces is an active and continuing
area of research and there are many results available.
For example,
Heber \cite{H} proved that a simply connected homogeneous harmonic space is either flat, or a
rank one symmetric space, or a Damek Ricci space.
Nikolayevsky \cite{N} proved that the density function of a non-compact harmonic space
is an exponential polynomial.
Ramachandran and Ranjan~\cite{RR} classified certain non-compact
harmonic spaces in terms of their density functions.
Itoh et al.~\cite{IP} studied harmonic spaces in relation to prescribed Ricci curvature and volume entropy.
Choe, Kim and Park~\cite{CHP} characterized certain harmonic spaces in terms of
the radial eigen-functions of the Laplacian.
Gilkey and Park~\cite{GP} classified harmonic space by using the asymptotic series
of the density function and the eigenvalues of the Jacobi operator.
We also refer to earlier work by Ruse~\cite{R63}, Ruse, Walker,
and Willmore~\cite{RWW61}, and Thomas and Titt~\cite{TT}.

We shall primarily be interested in analytic properties of harmonic spaces. There are,
however, many other characterizations of harmonic spaces. For example, it is known that
$\mathcal{M}$ is a harmonic
space if and only if every sufficiently small geodesic sphere has constant mean curvature.
 Szab\'o~\cite{S} proved that in a harmonic space, the volume of the intersection of
 two geodesic balls of small radii
depends only on the radii and the distance between the centers.  Csik\'os and Horv\'ath~\cite{CH1, CH2,CH3}
gave several characterizations of harmonic spaces in terms of volumes of geodesic balls or volumes of tubes.
Give the sphere bundle $S(M)$ of unit tangent vector fields the Sasaki metric.
If $\xi$ is a unit tangent vector field, then $\xi$ defines a smooth map from $M$ to $S(M)$.
 Boeckx and Vanhecke \cite{BV} studied the energy functional of these maps and
 showed that $\mathcal{M}$  is a harmonic space if and only if each radial unit vector field is a critical point of
 the energy functional within the class of unit tangent vector fields.

Flat space, the rank one symmetric spaces, and the Damek
 Ricci spaces are harmonic spaces; the classification of harmonic spaces is incomplete as it is not known
 if these are the only possible examples.
 Although our interest is primarily in harmonic spaces, the results of this paper are also applicable in the context
of Riemannian manifolds which are harmonic with respect to a single point; there is relatively less work
in the literature concerning this class of spaces.
And, in particular, the nature of the radial eigen-spaces of the Laplacian on functions and on 1-forms
seems not to have been investigated previously.

\subsection{The Laplacian} Let $\vec x=(x^1,\dots,x^m)$ be an arbitrary system
of local coordinates on a Riemannian manifold $\mathcal{M}=(M,g)$.
Let $\Psi_{\vec x}:=\sqrt{\det(g_{ij})}$ be the volume density function in this
coordinate system; the Riemannian measure is given by
$\operatorname{dvol}=\Psi_{\vec x}dx^1\dots dx^m$ and the scalar Laplacian is given by
$\Delta^0=-\Psi_{\vec x}^{-1}\partial_{x^i}\Psi_{\vec x}g^{ij}\partial_{x^j}$ where we adopt the Einstein convention
and sum over repeated indices.
We shall use the notation $\tilde\Theta_P$
for the volume density function in geodesic coordinates
centered at $P$ and the notation $\Theta_P$ for the volume density function
in geodesic polar coordinates $(r,\theta)$ centered at $P$ for $0<r<\iota_P$.
Let $\operatorname{dvol}_{\vec\theta}$ be the volume element of the sphere of radius
1 in Euclidean space. Since $dx^1\dots dx^m=r^{m-1}dr\operatorname{dvol}_{\vec\theta}$,
$\Theta_P=r^{m-1}\tilde\Theta_P$. In geodesic normal coordinates, $g_{ij}=\delta_{ij}+O(\|x\|^2)$.
Consequently,
\begin{equation}\label{E1.a}
\tilde\Theta_P=1+O(\|x\|^2)\,.
\end{equation}We work on the punctured disk $0<r<\iota_P$ and say that $\phi\in C^\infty(0,\iota_P)$ is a {\it radial harmonic function}
if $\Delta^0\phi^0(r)=0$.
The following is well known (see, for example, the discussion in \cite{B,BTV, K16}).

\begin{theorem}
Let $\mathcal{M}=(M,g)$ be a Riemannian manifold.
\begin{enumerate}
\item The following assertions are equivalent and if any
is satisfied, then $\mathcal{M}$ is said to be a {\bf harmonic space}.
\begin{enumerate}
\item $\Theta_P(r,\theta)=\Theta_P(r)$ is independent of $\theta$ and only depends on $P$ and $r$.
\item $\Theta_P(r,\theta)=\Theta(r)$ is independent of $P$ and $\theta$ and only depends on $r$.
\item There exists a non-constant radial harmonic function near any $P\in M$.
\end{enumerate}
\item If $\mathcal{M}$ is a harmonic space, then $\mathcal{M}$ is Einstein.
\end{enumerate}\end{theorem}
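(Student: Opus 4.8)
The plan is to reduce both parts to the explicit form of $\Delta^0$ acting on a radial function in geodesic polar coordinates. First I would record that, by the Gauss lemma, in geodesic polar coordinates $(r,\theta)$ centered at $P$ one has $g_{rr}=1$ and $g_{r\theta}=0$; hence the general expression $\Delta^0 f=-\Psi_{\vec x}^{-1}\partial_{x^i}(\Psi_{\vec x}g^{ij}\partial_{x^j}f)$ collapses, for $\phi=\phi(r)$, to $\Delta^0\phi=-\Theta_P^{-1}\partial_r(\Theta_P\phi')$. Thus $\phi$ is radial harmonic on $(0,\iota_P)$ if and only if $\Theta_P(r,\theta)\,\phi'(r)=C(\theta)$ is independent of $r$ for each fixed $\theta$.

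Granting this, the equivalence (a)$\Leftrightarrow$(c) is essentially an integration. For (a)$\Rightarrow$(c), if $\Theta_P$ depends only on $r$ I would set $\phi(r):=\int^r\Theta_P(s)^{-1}\,ds$, so that $\Theta_P\phi'\equiv1$ and $\phi$ is non-constant because $\Theta_P>0$. For (c)$\Rightarrow$(a), starting from a non-constant radial harmonic $\phi$, the relation $\Theta_P\phi'=C(\theta)$ first forces $\phi'$ to be nowhere zero: a zero of $\phi'$ would give $C\equiv0$ and hence $\phi'\equiv0$ since $\Theta_P>0$. Consequently, for any two directions the ratio $\Theta_P(r,\theta_1)/\Theta_P(r,\theta_2)=C(\theta_1)/C(\theta_2)$ is independent of $r$; letting $r\to0$ and invoking the normalization $\tilde\Theta_P=1+O(\|x\|^2)$ of \eqref{E1.a} shows this ratio equals $1$, so $\Theta_P$ is independent of $\theta$.

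The implication (b)$\Rightarrow$(a) is immediate. For (a)$\Rightarrow$(b) I would use the symmetry of the two-point density $\omega(P,Q):=\tilde\Theta_P(Q)$, namely $\omega(P,Q)=\omega(Q,P)$, which follows from expressing $\omega$ through the Jacobi fields along the geodesic joining $P$ and $Q$ together with the symmetry of the associated index form. Under (a) we may write $\omega(P,Q)=f_P(d(P,Q))$, and the symmetry then yields $f_P(\rho)=f_Q(\rho)$ for every $Q$ on the geodesic sphere $S(P,\rho)$; thus $Q\mapsto f_Q(\rho)$ is constant on each such sphere. Since two sufficiently close points always lie on a common geodesic sphere of radius $\rho$ (take $P$ on the perpendicular bisector of the short geodesic joining them), this map is locally constant, hence constant on the connected manifold $M$, which is precisely assertion (b). I expect the main obstacle to be a careful justification of the density symmetry $\omega(P,Q)=\omega(Q,P)$ and of the elementary-but-fiddly bisector lemma used to upgrade spherewise constancy to local constancy.

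Finally, for part (2) I would appeal to the standard normal-coordinate expansion $\tilde\Theta_P(\exp_P(r\theta))=1-\tfrac16\operatorname{Ric}_P(\theta,\theta)\,r^2+O(r^3)$. By part (1) the left-hand side is independent of the unit direction $\theta$, so $\operatorname{Ric}_P(\theta,\theta)$ is independent of $\theta$; polarization then gives $\operatorname{Ric}_P=\lambda(P)\,g_P$ at each point, i.e. $\mathcal{M}$ is pointwise Einstein, and Schur's lemma (for $m\ge3$) promotes $\lambda$ to a constant.
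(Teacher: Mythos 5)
The paper does not prove this theorem at all --- it is stated as well known, with the reader referred to Besse, Berndt--Tricerri--Vanhecke, and Knieper --- so there is no in-paper argument to compare against; your proposal is, however, essentially the classical proof from those references, and it is sound. Your reduction of (a)$\Leftrightarrow$(c) to the first integral $\Theta_P(r,\theta)\,\phi'(r)=C(\theta)$ is a clean variant of what the paper itself does later for Theorem~\ref{T1.3}\,(1): there the authors instead show $\Xi_P=-(\partial_r\phi)^{-1}\partial_r^2\phi$ is independent of $\theta$ and then integrate the ODE $\partial_r\tilde\Theta_P=\tilde\Xi_P\tilde\Theta_P$ with initial condition $\tilde\Theta_P(0,\theta)=1$; your normalization via the limit $r\to0$ and Equation~(\ref{E1.a}) plays exactly the role of that initial condition, and both arguments hinge on the same observation that $\partial_r\phi$ cannot vanish anywhere. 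The step (a)$\Rightarrow$(b) via the symmetry $\omega(P,Q)=\omega(Q,P)$ of Ruse's two-point density is the standard route (Besse, Ch.~6), and you have correctly isolated the two points that genuinely need care: the proof of the symmetry itself (Jacobi fields along the reversed geodesic) and the equidistant-point lemma needed to pass from constancy on geodesic spheres to local constancy in $Q$; a further small point worth flagging is that the chaining argument gives $f_P(\rho)=f_Q(\rho)$ only for $\rho$ small relative to the pair, so one either quotes real-analyticity of $\tilde\Theta_P$ in $r$ (available once Einstein, hence analyticity of the metric, is known) or runs the bisector argument with a locally uniform radius to cover all $\rho<\iota_P$. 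Part (2) via the coefficient $-\tfrac16\operatorname{Ric}_P(\theta,\theta)$ of $r^2$ in $\tilde\Theta_P$ plus polarization and Schur is standard and consistent with the paper's own formula $\mathcal{H}_2=-\tfrac16\Tr\{\mathcal{J}(\xi)\}$ in Theorem~\ref{T1.2}. No genuine gaps beyond the ones you already identified as requiring careful write-up.
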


\subsection{Power series expansions}
Expand $\tilde\Theta_P$ in formal power series about a point $P$ of $M$. If $\mathcal{M}$ is
harmonic at the point $P$, then there are no terms of odd degree and we have
\begin{equation}\label{E1.b}
\tilde\Theta_P(r)\sim1+\sum_{n=1}^\infty\mathcal{H}_{2n}r^{2n}\,.
\end{equation}
The asymptotic coefficients $\mathcal{H}_{2n}$ are geometrically determined. If $\xi$ is a unit tangent vector field, let
$\mathcal{J}_0(\xi):=\mathcal{J}(\xi)$ be the Jacobi operator and
let $\mathcal{J}_k(\xi)=\nabla_\xi^k\mathcal{J}(\xi)$.
We refer to \cite{GP} a discussion of the history involved in the proof of the following result.

\begin{theorem}\label{T1.2}
 Let $\mathcal{M}$ be a harmonic space. Let $\xi$ be any unit tangent vector.
 \begin{enumerate}
\item $\mathcal{H}_2= -\displaystyle\frac{1}6\Tr\{\mathcal{J}(\xi)\}$.
\smallbreak\item $\mathcal{H}_4=\displaystyle\frac1{72}\Tr\{\mathcal{J}(\xi)\}^2-\frac{1}{180}\Tr\{\mathcal{J}(\xi)^2\}$.
\smallbreak\item $\mathcal{H}_6=\displaystyle-\frac{\Tr\{\mathcal{J}(\xi)\}^3}{1296}
+\frac{\Tr\{\mathcal{J}\}\Tr\{\mathcal{J}(\xi) ^2\}}{1080}
-\frac{\Tr\{\mathcal{J}(\xi)^3\}}{2835}+\frac{\Tr\{\mathcal{J}_1(\xi)^2\}}{10080}$.
\smallbreak\item $\mathcal{H}_8=\displaystyle\frac{\Tr\{\mathcal{J}(\xi)\}^4}{31104}-\frac{\Tr\{\mathcal{J}(\xi)\}^2 \Tr\{\mathcal{J}(\xi)^2\}}{12960}+\frac{\Tr\{\mathcal{J}(\xi)\} \Tr\{\mathcal{J}(\xi)^3\}}{17010}$
\smallbreak\qquad$\displaystyle-\frac{\Tr\{\mathcal{J}(\xi)\} \Tr\{\mathcal{J}_1(\xi)^2\}}{60480}
+\frac{\Tr\{\mathcal{J}(\xi)^2\}^2}{64800}-\frac{\Tr\{\mathcal{J}(\xi)^4\}}{37800}$
\smallbreak\qquad$\displaystyle-\frac{ \Tr\{\mathcal{J}(\xi)^2\mathcal{J}_2(\xi)\}}{340200}+\frac{\Tr\{\mathcal{J}(\xi)\mathcal{J}_1(\xi)^2\}}{54432}-\frac{\Tr\{\mathcal{J}_2(\xi)^2\}}{907200}$.
\end{enumerate}
\end{theorem}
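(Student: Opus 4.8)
The plan is to reduce the statement to the matrix Jacobi equation along a radial geodesic and then to extract the Taylor coefficients of the resulting Jacobian by formal power-series algebra. Fix a unit tangent vector $\xi$ at $P$, let $\gamma(r)=\exp_P(r\xi)$, and choose a parallel orthonormal frame for $\xi^\perp$ along $\gamma$. Let $\mathcal{A}(r)$ be the $(m-1)\times(m-1)$ matrix-valued solution of
\begin{equation*}
\mathcal{A}''(r)+\mathcal{R}(r)\,\mathcal{A}(r)=0,\qquad \mathcal{A}(0)=0,\quad \mathcal{A}'(0)=\mathrm{Id},
\end{equation*}
where $\mathcal{R}(r)$ is the Jacobi operator $\mathcal{J}(\gamma'(r))$ expressed in this frame. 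The standard computation of the differential of $\exp_P$ in terms of Jacobi fields gives $\tilde\Theta_P(r,\xi)=\det\bigl(\mathcal{A}(r)/r\bigr)$, which reduces to $1$ in the flat case. Writing $\mathcal{J}$, $\mathcal{J}_k$ for $\mathcal{J}(\xi)$, $\mathcal{J}_k(\xi)$ and using the covariant Taylor expansion $\mathcal{R}(r)\sim\sum_{k\ge0}\tfrac{r^k}{k!}\mathcal{J}_k$ converts the whole problem into graded power-series algebra, where one assigns weight $2+k$ to $\mathcal{J}_k$ so that $\mathcal{H}_{2n}$ has weight $2n$.

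First I would solve the Jacobi equation recursively. Writing $\mathcal{A}(r)=\sum_{j\ge1}A_j r^j$, the initial conditions give $A_0=0$, $A_1=\mathrm{Id}$, $A_2=0$, and matching the coefficient of $r^n$ in $\mathcal{A}''=-\mathcal{R}\mathcal{A}$ yields
\begin{equation*}
(n+2)(n+1)\,A_{n+2}=-\sum_{\substack{k+j=n\\ j\ge1}}\tfrac1{k!}\,\mathcal{J}_k\,A_j,\qquad n\ge1,
\end{equation*}
so that $A_3=-\tfrac16\mathcal{J}$, $A_4=-\tfrac1{12}\mathcal{J}_1$, $A_5=\tfrac1{120}\mathcal{J}^2-\tfrac1{40}\mathcal{J}_2$, and so on, each $A_j$ being a universal noncommutative polynomial of weight $j-1$ in the $\mathcal{J}_k$. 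Next I would pass from the determinant to traces via $\tilde\Theta_P=\det(\mathrm{Id}+B)=\exp\Tr\log(\mathrm{Id}+B)$ with $B:=\mathcal{A}(r)/r-\mathrm{Id}$, expanding $\Tr\log(\mathrm{Id}+B)=\Tr B-\tfrac12\Tr B^2+\tfrac13\Tr B^3-\cdots$ and exponentiating. Collecting the coefficient $c_{2n}(\xi)$ of $r^{2n}$ gives a universal trace polynomial in $\mathcal{J},\mathcal{J}_1,\dots$ valid on any Riemannian manifold; reaching $\mathcal{H}_8$ requires carrying the recursion to $A_9$ and the logarithm to order $r^8$.

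The final step imposes harmonicity. Because $\mathcal{M}$ is harmonic, $\tilde\Theta_P(r,\xi)=\tilde\Theta(r)$ is independent of $P$ and $\xi$, so every $c_n(\xi)$ is a universal constant, the odd-order ones vanish, and $c_{2n}=\mathcal{H}_{2n}$. The Einstein condition $\Tr\{\mathcal{J}\}=\mathrm{Ric}(\xi,\xi)=\mathrm{const}$ forces $\Tr\{\mathcal{J}_k\}=0$ for $k\ge1$, which already eliminates all ``single trace of a derivative'' monomials and yields $\mathcal{H}_2$ and $\mathcal{H}_4$. To treat the mixed-derivative monomials at higher weight one differentiates the constancy of the lower coefficients along $\gamma$: from $\nabla_\xi c_4=0$ one recovers $\Tr\{\mathcal{J}\mathcal{J}_1\}=0$, and from $\nabla_\xi^2 c_4=0$ one obtains $\Tr\{\mathcal{J}\mathcal{J}_2\}=-\Tr\{\mathcal{J}_1^2\}$; substituting this into the raw coefficient $c_6(\xi)$ collapses it to the stated $\mathcal{H}_6$. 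Iterating, the relations $\nabla_\xi^2 c_6=0$, $\nabla_\xi^4 c_4=0$, and so forth supply exactly the pointwise identities among $\mathcal{J},\mathcal{J}_1,\mathcal{J}_2,\dots$ needed to rewrite $c_8(\xi)$ in terms of the nine invariants appearing in $\mathcal{H}_8$.

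The main obstacle is the bookkeeping at weight $8$. Since the $\mathcal{J}_k$ do not commute, the weight-$8$ part of $c_8(\xi)$ is a large combination of trace monomials --- among them $\Tr\{\mathcal{J}\mathcal{J}_4\}$ and $\Tr\{\mathcal{J}_1\mathcal{J}_3\}$, which must be removed, and $\Tr\{\mathcal{J}^2\mathcal{J}_2\}$, $\Tr\{\mathcal{J}\mathcal{J}_1^2\}$, $\Tr\{\mathcal{J}_2^2\}$, which survive. The delicate point is to derive the complete set of harmonic-space identities from the differentiated constancy relations and to apply them, together with cyclicity of the trace, so that the reducible monomials cancel against one another and the result lands precisely on the listed basis with the stated rational coefficients. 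This is best organized strictly by weight and confirmed with a symbolic computation.
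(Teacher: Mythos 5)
The paper does not prove Theorem~\ref{T1.2}: it states the result and refers to \cite{GP} for its derivation and history, so there is no in-paper proof to compare against. Your outline is nevertheless the standard (and, as far as one can tell, the intended) route: identify $\tilde\Theta_P(r,\xi)$ with $\det(\mathcal{A}(r)/r)$ for the matrix Jacobi solution, expand $\mathcal{R}(r)\sim\sum_k\frac{r^k}{k!}\mathcal{J}_k$ in a parallel frame, convert the determinant to traces via $\exp\Tr\log$, and then impose harmonicity. Your recursion and the values $A_3=-\frac16\mathcal{J}$, $A_4=-\frac1{12}\mathcal{J}_1$, $A_5=\frac1{120}\mathcal{J}^2-\frac1{40}\mathcal{J}_2$ are correct, and they reproduce $\mathcal{H}_2$ and, after the Einstein identities $\Tr\{\mathcal{J}_k\}=0$ for $k\ge1$, the stated $\mathcal{H}_4$ (consistent with the paper's remark that the extra term $-\frac1{40}\Tr\{\mathcal{J}_2\}$ survives when $\mathcal{M}$ is only harmonic at $P$). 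Your mechanism for generating the higher identities --- differentiating the constancy of $\tilde\Theta$ along the geodesic, e.g.\ $\Tr\{\mathcal{J}\mathcal{J}_1\}=0$ and $\Tr\{\mathcal{J}\mathcal{J}_2\}=-\Tr\{\mathcal{J}_1^2\}$ --- is also the right one.

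The one substantive caveat is at weights $6$ and $8$: you assert, but do not verify, that the relations obtained by differentiating the constancy of the \emph{lower} coefficients suffice to collapse $c_6(\xi)$ and $c_8(\xi)$ onto the listed monomials with the listed rational coefficients. Note that what is constant is each full coefficient $c_{2n}$, not its individual constituents (e.g.\ $\Tr\{\mathcal{J}^3\}$ is not separately known to be constant from $c_6=\mathrm{const}$ alone), so the derived identities are linear relations among derivatives of several trace monomials at once, and one must also use the vanishing of the odd coefficients $c_5$, $c_7$. Organizing this correctly and landing on the exact coefficients (for instance the $\frac1{10080}\Tr\{\mathcal{J}_1(\xi)^2\}$ term, which absorbs the $\Tr\{\mathcal{J}\mathcal{J}_2\}$ contribution) is the entire content of the theorem at those orders; deferring it to a symbolic computation is reasonable and matches how the literature (and the cited reference \cite{GP}) handles it, but as written your argument is a correct setup plus a verified low-order check rather than a complete proof of items (3) and (4).
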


\subsection{The Laplacian in geodesic polar coordinates}
Let $(r,\theta)$ be godesic polar coordinates centered
at $P\in M$ for $0<r<\iota_P$. Let
$$\Xi_P:=\Theta_P^{-1}\partial_r\Theta_P=\partial_r\log(\Theta_P)\text{ and }
\tilde\Xi_P:=\tilde\Theta_P^{-1}\partial_r\tilde\Theta_P=\partial_r\log(\tilde\Theta_P)\,.
$$
Because $\Theta_P=r^{m-1}\tilde\Theta_P$ and $\tilde\Theta_P=1+O(r^2)$,
$\Xi_P=(m-1)r^{-1}+\tilde\Xi_P$ where $\tilde\Xi_P$
is regular and vanishes to first order at $r=0$.
Since $\partial_r\perp\partial_{\theta^i}$ and $g_{rr}=1$, if $\phi^0=\phi^0(r)$ is a radial function, then
\begin{equation}\label{E1.c}
\textstyle\Delta^0\phi^0=-\{\partial_r^2\phi^0+\Xi_P(r,\theta)\partial_r\phi^0\}
=-\left\{\partial_r^2\phi^0+\left(\frac{m-1}r+\tilde\Xi_P(r,\theta\right)\partial_r\phi^0\right\}\,.
\end{equation}
Thus $\Delta^0$ is singular at $r=0$.
We work with 1-forms rather than with vector fields; duality lets us pass between the two settings.
Let $\Delta^1$ be the Laplace-Beltrami operator on 1-forms. We use the intertwining relation
$d\Delta^0=\Delta^1d$.
Let $\phi^1(r)dr$ be a radial 1-form. Find a primitive $\phi^0$ so $\partial_r\phi^0=\phi^1$ or equivalently
$d\phi^0=\phi^1dr$. We then have an ODE which is singular at $r=0$:
\begin{eqnarray}
&&\Delta^1(\phi^1(r)dr)=\Delta^1(d\phi^0(r))=d\Delta^0\phi^0(r)=-d\{\partial_r^2\phi^0(r)+\Xi_P(r,\theta)\partial_r\phi^0(r)\}\
\nonumber\\[0.05in]
&=&-\partial_r\left\{\partial_r\phi^1(r)+\Xi_P(r,\theta)\phi^1(r)\right\}dr-\phi^1(r)d_\theta\Xi_P(r,\theta)\label{E1.d}\\[0.05in]
&=&\textstyle
-\partial_r\left\{\partial_r\phi^1(r)+\left(\frac{m-1}r+\tilde\Xi_P(r,\theta)\right)\phi^1(r)\right\}dr-\phi^1(r)d_\theta\tilde\Xi_P(r,\theta)\,.
\nonumber\end{eqnarray}

\subsection{Spaces of radial eigen-functions defined by the Laplacian}
Let $\mathfrak{E}_P^0(\lambda)$ (resp. $\mathfrak{E}_P^1(\lambda)$) be the eigen-space of radial functions
(resp. 1-forms):
\begin{eqnarray*}
&&\mathfrak{E}_P^0(\lambda):=\left\{\phi^0\in C^\infty(0,\iota_P):
\Delta^0(\phi^0(r))=\lambda\phi_0(r)\right\},\\
&&\mathfrak{E}_P^1(\lambda):=\left\{\phi^1dr\in C^\infty(0,\iota_P)dr:
\Delta^1(\phi^1(r)dr)=\lambda\phi^1(r)dr\right\}\,.
\end{eqnarray*}
If $\lambda\ne0$, $d$ is an
isomorphism from $\mathfrak{E}_P^0(\lambda)$ to $\mathfrak{E}_P^1(\lambda)$ so it suffices to study
$\mathfrak{E}_P^0(\lambda)$. Let $\mathfrak{H}_P^0$ (resp. $\mathfrak{H}_P^1$) be the space of radial harmonic functions (resp. 1-forms):
$$
\mathfrak{H}_P^0:=\mathfrak{E}_P^0(0)\text{ and }
\mathfrak{H}_P^1:=\mathfrak{E}_P^1(0)\,.
$$
Let $\tilde\Delta^1(\phi^1)=-g^{ij}\phi_{;ij}$
be the \emph{rough Laplacian}. The Bochner formula then yields
$\Delta^1(\phi^1)=\tilde\Delta^1(\phi^1)+\rho(\phi^1)$ where $\rho$ is the Ricci operator.
Let $\widetilde{\mathfrak{H}}_P^1$ be the space of radial 1-forms
in $\ker\tilde\Delta^1$. Consequently, if $\mathcal{M}$ is Einstein, then
$\tilde{\mathfrak{H}}^i_P=\mathfrak{E}^i(\lambda)$
where $\lambda$ is the Einstein constant. We have the following characterizations of what it means for
a space to be harmonic with respect to a point $P$.

\begin{theorem}\label{T1.3} Let $P\in\mathcal{M}$.
\begin{enumerate}
\item The following assertions are equivalent
and if any is satisfied, then one says that $\mathcal{M}$ is harmonic with respect to $P$.
\smallbreak{\rm(a)} $\Theta_P(r,\theta)=\Theta_P(r)$.\hspace{.3cm}{\rm(b)} $\Xi_P(r,\theta)=\Xi_P(r)$.
\hspace{.3cm}{\rm(c)} $\dim\{\mathfrak{H}_P^0\}=2$.
\smallbreak
{\rm(d)} $\dim\{\mathfrak{H}_P^0\}\ge2$.
\hspace{.75cm}{\rm(e)} $\dim\{{\mathfrak{H}_P^1}\}=2$.\hspace{.88cm}
{\rm(f)} $\dim\{{\mathfrak{H}_P^1}\}\ge1$.
\smallbreak\item The following assertions are equivalent if $\lambda\ne0$ and if
any is satisfied, then $\mathcal{M}$ is harmonic with respect to $P$.
\smallbreak{\rm(a)} $\Xi_P(r,\theta)=\Xi_P(r)$.\hfill{\rm(b)} $\dim\{\mathfrak{E}_P^0(\lambda)\}=2$.
\hfill{\rm(c)} $\dim\{\mathfrak{E}_P^0(\lambda)\}\ge1$.\hfill\vphantom{.}
\smallbreak{\rm(d)} $\dim\{\mathfrak{E}_P^1(\lambda)\}=2$.
\hspace{.3cm}{\rm(e)} $\dim\{\mathfrak{E}_P^1(\lambda)\}\ge1$.
\smallbreak\item The following assertions are equivalent if $\mathcal{M}$ is Einstein.
\smallbreak{\rm(a)} $\Xi_P(r,\theta)=\Xi_P(r)$.\hspace{.4cm}{\rm(b)}
$\dim\{\widetilde{\mathfrak H}_P^1\}=2$.
\hspace{.9cm}{\rm(c)} $\dim\{\widetilde{\mathfrak H}_P^1\}\ge1$.
\end{enumerate}\end{theorem}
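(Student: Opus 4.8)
The plan is to reduce every equivalence to the analysis of the singular ordinary differential equations \eqref{E1.c} and \eqref{E1.d}, with the angular variable $\theta$ treated as a parameter. The guiding principle is that a \emph{radial} solution is a function of $r$ alone that must solve the relevant equation at \emph{every} point $(r,\theta)$ of the punctured disk, so any $\theta$-dependence of the coefficient $\Xi_P(r,\theta)$ becomes an overdetermination that forces rigidity. Two kinds of implication recur. In the \emph{constructive} direction one assumes $\Xi_P=\Xi_P(r)$ is radial, so \eqref{E1.c} and \eqref{E1.d} become genuine ODEs on $(0,\iota_P)$ with smooth coefficients whose solution spaces are exactly two-dimensional; this yields the equalities ``$\dim=2$''. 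In the \emph{rigidity} direction one assumes a nontrivial radial solution exists and extracts radiality of $\Xi_P$; this yields the implications with hypothesis ``$\dim\ge1$'' or ``$\dim\ge2$''.

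For Part (1), the equivalence (a)$\iff$(b) is immediate from $\Xi_P=\partial_r\log\Theta_P$: radiality of $\Theta_P$ gives that of $\Xi_P$, while conversely $\partial_\theta\Xi_P=0$ makes $\partial_\theta\log\Theta_P$ independent of $r$, and \eqref{E1.a} forces $\partial_\theta\log\Theta_P\to0$ as $r\to0$, hence $\partial_\theta\log\Theta_P\equiv0$. For (b)$\Rightarrow$(c) I integrate $\partial_r^2\phi^0+\Xi_P(r)\partial_r\phi^0=0$ with the integrating factor $\Theta_P$, obtaining the two-dimensional space spanned by $1$ and $\int^r\Theta_P(s)^{-1}\,ds$. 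The rigidity step (d)$\Rightarrow$(b) is cleanest in terms of $\psi:=\partial_r\phi^0$, which solves the first-order equation $\partial_r\psi+\Xi_P(r,\theta)\psi=0$: for each fixed $\theta$ this is regular on $(0,\iota_P)$, so $\psi$ either vanishes identically or is nowhere zero; a non-constant $\phi^0$ gives $\psi\not\equiv0$, hence $\psi$ is nowhere zero and $\Xi_P=-\partial_r\psi/\psi$ is radial on all of $(0,\iota_P)$. Since (c)$\Rightarrow$(d) is trivial, (a)--(d) are equivalent. The $1$-form statements (e),(f) run through \eqref{E1.d}, which splits $\Delta^1(\phi^1\,dr)=0$ into the $dr$-component $\partial_r\{\partial_r\phi^1+\Xi_P\phi^1\}=0$ and the angular component $\phi^1\,d_\theta\Xi_P=0$.

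I expect the main obstacle to be the rigidity implication (f)$\Rightarrow$(b): a \emph{single} nonzero radial harmonic $1$-form must be leveraged into radiality of $\Xi_P$ on all of $(0,\iota_P)$, even though $\phi^1$ may vanish somewhere. The plan is as follows. The $dr$-component gives $\partial_r\phi^1+\Xi_P(r,\theta)\phi^1=c(\theta)$ (constant in $r$); differentiating in $\theta$ and inserting the angular component $\phi^1\,d_\theta\Xi_P=0$ shows $\partial_\theta c=0$, so $c$ is a genuine constant. If $\Xi_P$ were non-radial at some $(r_1,\theta_1)$, then $d_\theta\Xi_P\neq0$ for $r$ near $r_1$, so the angular component forces $\phi^1\equiv0$ on an interval, whence $c=0$; but then $\partial_r\phi^1+\Xi_P(r,\theta)\phi^1=0$ is a regular first-order linear ODE for each fixed $\theta$, and vanishing of $\phi^1$ on an interval forces $\phi^1\equiv0$ by uniqueness, contradicting $\phi^1\neq0$. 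Hence $\Xi_P$ is radial, giving (f)$\Rightarrow$(b). For the converse (b)$\Rightarrow$(e) I solve $\partial_r\phi^1+\Xi_P(r)\phi^1=c$ via the integrating factor $\Theta_P$, obtaining a two-parameter (hence two-dimensional) family indexed by $c$ and the homogeneous constant; since (e)$\Rightarrow$(f) is trivial, all of (a)--(f) are equivalent.

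Parts (2) and (3) then follow with little extra work. For Part (2), the isomorphism $d\colon\mathfrak{E}_P^0(\lambda)\to\mathfrak{E}_P^1(\lambda)$ for $\lambda\neq0$ noted before the theorem gives $\dim\mathfrak{E}_P^1(\lambda)=\dim\mathfrak{E}_P^0(\lambda)$, reducing (d),(e) to (b),(c); the constructive direction (a)$\Rightarrow$(b) solves the regular second-order ODE $\partial_r^2\phi^0+\Xi_P(r)\partial_r\phi^0+\lambda\phi^0=0$, and the rigidity direction (c)$\Rightarrow$(a) reads $\Xi_P$ off the open set $\{\partial_r\phi^0\neq0\}$, while on any interval where $\partial_r\phi^0\equiv0$ the equation forces $\lambda\phi^0=0$, hence $\phi^0\equiv0$ there, so second-order uniqueness rules out a nontrivial $\phi^0$ with an interval of critical points and $\{\partial_r\phi^0\neq0\}$ is dense. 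Finally, Part (3) is immediate from the Bochner identification $\widetilde{\mathfrak H}_P^1=\mathfrak{E}_P^1(\lambda)$ recorded before the theorem: if the Einstein constant $\lambda\neq0$ the equivalences are those of Part (2)(a),(d),(e), while if $\lambda=0$ one has $\widetilde{\mathfrak H}_P^1=\mathfrak H_P^1$ and invokes Part (1)(b),(e),(f). In every case both ``$\dim=2$'' and ``$\dim\ge1$'' are equivalent to radiality of $\Xi_P$.
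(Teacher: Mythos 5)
Your proposal is correct and follows essentially the same route as the paper: treat $\theta$ as a parameter in the singular radial equations \eqref{E1.c}, \eqref{E1.d}, and \eqref{E2.a}, obtain the dimension counts from the two-dimensional solution space of the resulting ODEs when $\Xi_P$ is radial, and in the rigidity direction read $\Xi_P$ off a nontrivial radial solution on the dense set where it does not degenerate, using uniqueness for linear ODEs to rule out degeneration on an interval. The only noticeable variation is in (1f)$\Rightarrow$(1b), where you integrate the $dr$-component to $\partial_r\phi^1+\Xi_P\phi^1=c$ and use first-order uniqueness to exclude $\phi^1$ vanishing on an interval, whereas the paper argues via an accumulating sequence of zeros and second-order uniqueness; both are sound.
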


Suppose that $\mathcal{M}$ is harmonic at the point $P$. Let $\vec x=(x^1,\dots,x^m)$ be {{geodesic}}
coordinates centered at $P$. Then $r=\{(x^1)^2+\dots+(x^m)^2\}^{1/2}$ is the geodesic distance to $P$.
Let $\theta=(1,0,\dots,0)$.
Since $\tilde\Theta_P(r)=\{\det(g_{ij}(\vec x))\}^{\frac12}(r,0,\dots,0)$, $\tilde\Theta_P$ is smooth at $r=0$.
Consequently,
$\tilde\Theta_P$ is an even function of $r$. The expansion of Theorem~\ref{T1.2} generalizes to this setting although there are extra terms. One has, for example,
$$\displaystyle\mathcal{H}_4(\xi)=\frac{\Tr\{\mathcal{J}(\xi)\}^2}{72}
-\frac{\Tr\{\mathcal{J}(\xi)^2\}}{180}-\frac{\Tr\{\mathcal{J}_2(\xi)\}}{40}\,.$$
The extra term $\Tr\{\mathcal{J}_2(\xi)\}$ vanishes if $\mathcal{M}$ is a harmonic space.
Similar formulas for $\mathcal{H}_6$ and $\mathcal{H}_8$ follow from the results in~\cite{GP}.

\subsection{The nature of the singularity at 0}
If $\mathcal{M}=(\mathbb{R}^m,g_e)$ is flat space,
then $\Theta_P(r)=r^{m-1}$ and it is immediate that
\begin{equation}\label{E1.e}
\mathfrak{H}^0=\left\{\begin{array}{ll}\operatorname{Span}\{\text{\bf 1},\log[r]\}&\text{ if }m=2\\
\operatorname{Span}\{\text{\bf 1},r^{2-m}\}&\text{ if }m>2\\
\end{array}\right\}\text{ and }
\mathfrak{H}^1=\operatorname{Span}\{r,r^{1-m}\}{ dr}\,.
\end{equation}
Let $C^\omega\Gronk$ be the ring of real analytic functions on $\Gronk$.
The following result generalizes Equation~(\ref{E1.e}) to the context of real analytic manifolds which
are harmonic with respect to some point.

\begin{theorem}\label{T1.4}
Let $\mathcal{M}$ be real analytic and harmonic with respect to the point $P$.
\begin{enumerate}
\item There is a basis $\{\phi_{0,\lambda}^0,\phi_{1,\lambda}^0\}$ for $\mathfrak{E}_P^0(\lambda)$ so that
\begin{enumerate}
\item $\phi_{0,\lambda}^0\in C^\omega\Gronk$ satisfies
$\displaystyle\phi_{0,\lambda}^0=1-\frac{\lambda}{2m}r^2+\frac{\lambda  (\lambda+4 \mathcal{H}_{2})}{8 m (m+2)} r^4$\newline\vphantom{.}\qquad
$\displaystyle-\frac{\lambda   \left(16 \mathcal{H}_{2}^2 (m+4)+12 \mathcal{H}_{2} \lambda -32 \mathcal{H}_{4} (m+2)+\lambda ^2\right)}
{48 m \left(m^2+6 m+8\right)}r^6+O(r^8)$.
\item There exists $\sigma\in C^\omega\Gronk$ and $C(\mathcal{M})\in\mathbb{R}$
so that
\begin{enumerate}
\item If $m=2$, $\phi_{1,\lambda}^0=\sigma+\phi_{0,\lambda}^0\log(r)$.
\item If $m\ge3$,
$\phi_{1,\lambda}^0=r^{2-m}\sigma+\frac1{m-2}C(\mathcal{M})\phi_{0,\lambda}^0\log(r)$
for $\sigma(0)=1$.
\end{enumerate}\end{enumerate}
\smallbreak\item There is a basis $\{\phi^1_{0,\lambda}dr,\phi^1_{1,\lambda}dr\}$
for $\mathfrak{C}^1_P(\lambda)$
where $\phi^1_{1,\lambda}=\partial_r\phi^0_{1,\lambda}$.
If $\lambda\ne0$, we may take $\phi^1_{0,\lambda}=-\frac m\lambda\partial_r\phi^0_{0,\lambda}$.
If $\lambda=0$, we may take
$\phi^1_{0,0}:=\displaystyle\lim_{\lambda\rightarrow0}\phi^1_{0,\lambda}$. \par\noindent We have
$\displaystyle\phi^1_{0,\lambda}=r-\frac{\lambda+4 \mathcal{H}_{2}}{2(m+2)} r^3$\smallbreak
$\displaystyle\qquad\qquad
+\frac{16 \mathcal{H}_{2}^2 (m+4)+12 \mathcal{H}_{2} \lambda -32 \mathcal{H}_{4} (m+2)+\lambda ^2}
{8(m^2+6 m+8)}r^5+O(r^7)$.
\end{enumerate}\end{theorem}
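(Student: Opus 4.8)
The plan is to recognize the defining equations for $\phi^0_{i,\lambda}$ and $\phi^1_{i,\lambda}$ as linear second-order ODEs with a regular singular point at $r=0$ and to apply the Frobenius method. Since $\mathcal{M}$ is harmonic with respect to $P$, Theorem~\ref{T1.3} gives $\Xi_P(r,\theta)=\Xi_P(r)$, so by Equation~(\ref{E1.c}) the condition $\phi^0\in\mathfrak{E}^0_P(\lambda)$ is equivalent to the genuine ODE
\begin{equation*}
\partial_r^2\phi^0+\left(\tfrac{m-1}r+\tilde\Xi_P(r)\right)\partial_r\phi^0+\lambda\phi^0=0.
\end{equation*}
Substituting $\phi^0\sim r^s$ and retaining the most singular terms produces the indicial polynomial $s(s+m-2)$, whose roots are $s=0$ and $s=2-m$. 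For $m\ge3$ these differ by the positive integer $m-2$, and for $m=2$ they coincide; this is exactly the resonant situation of Frobenius theory in which the second solution may acquire a logarithm.

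For part (1)(a) I would construct the solution attached to the larger root $s=0$. Because $\mathcal{M}$ is real analytic and harmonic at $P$, the density $\tilde\Theta_P$ is real analytic and even in $r$, whence $\tilde\Xi_P=\partial_r\log\tilde\Theta_P$ is real analytic and odd; the ODE therefore admits a real analytic even solution $\phi^0_{0,\lambda}=\sum_{n\ge0}a_{2n}r^{2n}$ with $a_0=1$. Feeding the expansion $\tilde\Theta_P\sim1+\sum\mathcal{H}_{2n}r^{2n}$ of Equation~(\ref{E1.b}) into the ODE gives a recursion whose leading coefficient at order $r^{2n-2}$ is the indicial value $2n(2n+m-2)\ne0$; each $a_{2n}$ is thereby determined uniquely, and solving the first three steps reproduces the stated formula. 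Convergence on $\Gronk$ follows from the standard Frobenius estimates together with the real analyticity of the coefficients, so $\phi^0_{0,\lambda}\in C^\omega\Gronk$.

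Part (1)(b) is where the main difficulty lies. With $\phi^0_{0,\lambda}$ in hand I would obtain the second solution by reduction of order: the Wronskian is proportional to $e^{-\int\Xi_P\,dr}=\Theta_P^{-1}=r^{1-m}\tilde\Theta_P^{-1}$, so a second solution is
\begin{equation*}
\phi^0_{1,\lambda}=\phi^0_{0,\lambda}\int\frac{r^{1-m}\,\tilde\Theta_P(r)^{-1}}{\big(\phi^0_{0,\lambda}(r)\big)^2}\,dr.
\end{equation*}
The integrand is $r^{1-m}$ times a real analytic even function, so integrating term by term yields a $C^\omega$ function times $r^{2-m}$ (for $m\ge3$) plus a logarithm precisely when the coefficient of $r^{-1}$ in the integrand fails to vanish; note that this coefficient is automatically zero when $m$ is odd, consistent with the absence of log terms in odd dimensions. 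That coefficient is a geometric invariant, which I would designate $C(\mathcal{M})$; after rescaling to normalize $\sigma(0)=1$ it reappears, up to the universal factor $\tfrac1{m-2}$, as the logarithmic coefficient, giving the asserted form, while for $m=2$ the $r^{-1}$ term has coefficient $1$ and produces the $\log r$ contribution with $\phi^0_{0,\lambda}$ as coefficient. The hard part is bookkeeping the resonance cleanly and verifying that $C(\mathcal{M})$ is intrinsic rather than a coordinate artifact, with the normalizations tracked so that the coefficient emerges as $\tfrac1{m-2}C(\mathcal{M})$.

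Finally, part (2) follows by duality. The intertwining relation $d\Delta^0=\Delta^1d$ shows that $\phi^0\mapsto\partial_r\phi^0$ carries $\mathfrak{E}^0_P(\lambda)$ into $\mathfrak{E}^1_P(\lambda)$, so $\phi^1_{1,\lambda}:=\partial_r\phi^0_{1,\lambda}$ and, for $\lambda\ne0$, $\phi^1_{0,\lambda}:=-\tfrac m\lambda\partial_r\phi^0_{0,\lambda}$ are eigen-$1$-forms. The factor $-m/\lambda$ is chosen so that the leading term is $r$ independent of $\lambda$, which is exactly what permits the limit $\phi^1_{0,0}:=\lim_{\lambda\to0}\phi^1_{0,\lambda}$ to exist and to lie in $\mathfrak{H}^1$, even though $d$ annihilates the constant $\phi^0_{0,0}=1$. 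Differentiating the series of part (1)(a) and multiplying by $-m/\lambda$ then reproduces the stated expansion for $\phi^1_{0,\lambda}$, and linear independence of the two $1$-forms is inherited from that of $\phi^0_{0,\lambda}$ and $\phi^0_{1,\lambda}$, completing the basis.
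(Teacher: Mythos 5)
Your proposal is correct, and while parts (1a) and (2) follow essentially the paper's route (a Frobenius-type power series recursion for the regular solution attached to the indicial root $s=0$, then pushing forward by $d$ and dividing by $\lambda$), your treatment of part (1b) is genuinely different from the paper's. The paper never writes down the Wronskian: it computes $\mathcal{P}(\phi^0_{0,\lambda}\log r)=(m-2)r^{-2}+\eta$ with $\eta$ analytic, then builds the second solution by an explicit ansatz $r^{2-m}p(r)+c\,\phi^0_{0,\lambda}\log r+\sigma$, choosing the polynomial $p$ recursively to kill the singular terms and invoking its Lemma~\ref{L3.1} (the analytic solvability of $\mathcal{P}\sigma=-\eta$ with the inductive bound $|\psi_k|\le\kappa^{k+1}$) to absorb the analytic remainder; this is done case by case for $m=2,3,4$ and $m\ge5$, and has the side benefit of producing the explicit values $C(\mathcal{M}^4)$, $C(\mathcal{M}^6)$, $C(\mathcal{M}^8)$ quoted in the following theorem. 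Your reduction-of-order formula $\phi^0_{1,\lambda}=\phi^0_{0,\lambda}\int r^{1-m}\tilde\Theta_P^{-1}(\phi^0_{0,\lambda})^{-2}\,dr$ is cleaner structurally: it makes the dichotomy between $m=2$, $m$ odd, and $m$ even $\ge4$ transparent (the log appears exactly when the even integrand contributes an $r^{-1}$ term, which is a pure parity statement), and it gives the vanishing of $C(\mathcal{M})$ for odd $m$ for free, whereas the paper relegates that to an omitted combinatorial computation. You do still need the paper's Lemma~\ref{L3.1}-type estimate (or the standard Frobenius convergence argument you cite) to know $\phi^0_{0,\lambda}\in C^\omega\Gronk$ and that it is even, and you should note that the integral representation is used only near $r=0$ where $\phi^0_{0,\lambda}\ne0$, the solution then extending to all of $(0,\iota_P)$ by Theorem~\ref{T2.1}. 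The only under-justified step is the limit $\lambda\to0$ in part (2): the existence of the limit does not follow merely from the leading term being $r$; one needs, as the paper argues, that $\phi^0_{0,\lambda}$ is jointly analytic in $(r,\lambda)$ and that $\phi^0_{0,0}\equiv1$, so that $\partial_r\phi^0_{0,\lambda}$ is divisible by $\lambda$ in the analytic category, and then harmonicity of the limit follows by continuity. With that one sentence added, your argument is complete.
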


The constant $C(\mathcal{M})$ controls whether or not the log terms are present in $\phi_{1,\lambda}^0$
and, if $\lambda\ne0$ in $\phi_{1,\lambda}^0$.
A direct computation establishes the following result; we omit the proof since it is a combinatorial exercise.
\begin{theorem} Let $C(\mathcal{M})$ be the constant of Theorem~\ref{T1.4}~(1b). Then
\begin{enumerate}
\item $C(\mathcal{M})$ vanishes if $m$ is odd.
$C(\mathcal{M}^4)=4\mathcal{H}_2-\lambda$.
\smallbreak\noindent
$C(\mathcal{M}^6)=-16\mathcal{H}_2^2+16\mathcal{H}_4+3\mathcal{H}_2\lambda
-\frac{1}4\lambda^2$.
\smallbreak\noindent
$C(\mathcal{M}^8)=-\frac{21 }{4}\mathcal{H}_2^2 \lambda +36 \mathcal{H}_2^3
-72 \mathcal{H}_2 \mathcal{H}_4+\frac{3 }{8}\mathcal{H}_2 \lambda ^2
 +5 \mathcal{H}_4 \lambda +36 \mathcal{H}_6-\frac{1}{64}\lambda ^3$.
\smallbreak\item For the rank-one symmetric spaces in low dimensions, we have
 \begin{enumerate}
\item$C(S^4)=-(\lambda+2)$,\hspace{2.6cm}$C(\mathbb{H}^4)=-(\lambda-2)$,
\item$C(\mathbb{CP}^2)=-(\lambda+4)$,\hspace{2.3cm}$C(\widetilde{\mathbb{CP}^2})=-(\lambda-4)$,
\item$C(S^6)=\frac{-1}4(\lambda+6)(\lambda+4)$,\hspace{1.2cm}
$C(\mathbb{H}^6)=\frac{-1}4(\lambda-6)(\lambda-4)$,
\item$C(\mathbb{CP}^3)=\frac{-1}4(\lambda+8)^2$,\hspace{1.9cm}
$C(\widetilde{\mathbb{CP}^3})=\frac{-1}4(\lambda-8)^2$,
\item$C(S^8)=\frac{-1}{64}(\lambda+6)(\lambda+10)(\lambda+12)$,\hspace{.2cm}
$C(\mathbb{H}^8)=\frac{-1}{64}(\lambda-6)(\lambda-10)(\lambda-12)$,
\item$C(\mathbb{CP}^4)=\frac{-1}{64}(\lambda+12)^2(\lambda+16)$,\hspace{.45cm}
$C(\widetilde{\mathbb{CP}^4})=\frac{-1}{64}(\lambda-12)^2(\lambda-16)$,
\item$C(\mathbb{HP}^2)=\frac{-1}{64}(\lambda+16)(\lambda+24)^2$,\hspace{.45cm}
$C(\widetilde{\mathbb{HP}^2})=\frac{-1}{64}(\lambda-16)(\lambda-24)^2$.
\end{enumerate}\end{enumerate}\end{theorem}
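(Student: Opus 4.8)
The plan is to reduce the entire statement to a Frobenius analysis of the radial eigen-equation at its regular singular point $r=0$. Combining Equation~(\ref{E1.c}) with $\Delta^0\phi^0=\lambda\phi^0$, a radial eigen-function satisfies
\[
\phi''+\Big(\tfrac{m-1}{r}+\tilde\Xi_P(r)\Big)\phi'+\lambda\phi=0,
\]
where $\tilde\Xi_P=\partial_r\log\tilde\Theta_P$ is an odd real-analytic function vanishing at $r=0$ whose Taylor coefficients are explicit polynomials in the $\mathcal{H}_{2n}$ of Equation~(\ref{E1.b}); for instance $\tilde\Xi_P=2\mathcal{H}_2 r+(4\mathcal{H}_4-2\mathcal{H}_2^2)r^3+\cdots$. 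Writing $L$ for the operator on the left, its indicial polynomial at $r=0$ is $s(s+m-2)$, so the indicial roots are $0$ and $2-m$, differing by the non-negative integer $m-2$. The root $0$ gives the regular solution $\phi_{0,\lambda}^0$ of Theorem~\ref{T1.4}(1a), and the whole point is to decide when the root $2-m$ forces a logarithm, the coefficient of that logarithm being $C(\mathcal{M})$.

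First I would dispose of odd $m$ by parity. Since $\tilde\Xi_P$ is odd, $L$ preserves parity, so its solution space splits into even and odd parts. The regular solution $\phi_{0,\lambda}^0$ is even, while the singular solution begins at $r^{2-m}$; when $m$ is odd this is an odd power, so the singular solution is odd and is automatically independent of the even solution with no logarithmic correction. Hence no resonance occurs and $C(\mathcal{M})=0$ when $m$ is odd.

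For even $m$ I would run the Frobenius recursion explicitly. Substituting $\phi=\sum_{n\ge0}c_n r^{\,n+2-m}$ with $c_0=1$ into $L\phi=0$ yields a recursion in which the coefficient of $c_n$ is the indicial factor $n(n+2-m)$; all odd $c_n$ vanish by parity, and one solves successively for $c_2,c_4,\dots,c_{m-4}$. At $n=m-2$ the indicial factor vanishes and the recursion degenerates into an obstruction $\beta_m$, the coefficient of $r^{-2}$ in $L$ applied to the truncated series. A short computation gives that $L[\phi_{0,\lambda}^0\log r]$ has leading term $(m-2)r^{-2}$, so adding $\tfrac{1}{m-2}C(\mathcal{M})\,\phi_{0,\lambda}^0\log r$ contributes exactly $C(\mathcal{M})r^{-2}$; matching against $\beta_m$ forces $C(\mathcal{M})=-\beta_m$. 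For $m=4$ the resonance is immediate with $\beta_4=\lambda-4\mathcal{H}_2$, giving $C(\mathcal{M}^4)=4\mathcal{H}_2-\lambda$, and iterating the recursion through $c_2$ for $m=6$ and through $c_2,c_4$ for $m=8$ produces the stated quadratic and cubic expressions. The genuine labor, and the main obstacle, is this bookkeeping for $m=8$: one must expand $\tilde\Xi_P$ to order $r^5$ in the $\mathcal{H}_{2n}$ via the logarithmic derivative, carry the convolutions in $\tilde\Xi_P\phi'$ accurately through several recursion steps, and simplify the resulting polynomial in $\mathcal{H}_2,\mathcal{H}_4,\mathcal{H}_6$ and $\lambda$.

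Finally, for the rank-one symmetric spaces of Part~(2) I would not repeat the ODE analysis but feed the curvature data into the formulas of Part~(1). Each such space has constant Jacobi eigenvalues, so Theorem~\ref{T1.2} gives the $\mathcal{H}_{2n}$ as explicit numbers once the normalization is fixed; substituting these with $\lambda$ left free yields the tabulated polynomials. For example $S^4$ has $\Tr\{\mathcal{J}(\xi)\}=3$, hence $\mathcal{H}_2=-\tfrac12$ and $C(S^4)=4\mathcal{H}_2-\lambda=-(\lambda+2)$, while $\mathbb{H}^4$ has $\Tr\{\mathcal{J}(\xi)\}=-3$ and gives $-(\lambda-2)$; the complex and quaternionic cases follow identically from their Jacobi spectra. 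As a consistency check one may instead solve the radial equation directly, since for these spaces the density function is an elementary product of powers of $\sin$, $\cos$, and $\sinh$ and the equation becomes hypergeometric, so $C(\mathcal{M})$ can be read off from the classical connection coefficients; agreement of the two routes validates the Part~(1) computation.
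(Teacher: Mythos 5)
Your proposal is correct and is essentially the computation the paper has in mind: the paper omits the proof as a ``combinatorial exercise,'' but the mechanism is exactly the recursion for $r^{2-m}p(r)$ set up in the proof of Theorem~\ref{T1.4}~(1b), where $C(\mathcal{M})$ is read off as (minus) the obstructing coefficient $C_1$ of $r^{-2}$ that must be cancelled by $\frac{1}{m-2}C(\mathcal{M})\phi^0_{0,\lambda}\log r$, e.g.\ $C(\mathcal{M}^4)=2\xi_1-\lambda=4\mathcal{H}_2-\lambda$. Your parity argument for odd $m$ and the substitution of the Jacobi spectra from Theorem~\ref{T1.2} for Part~(2) are both sound and consistent with the paper's values.
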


\subsection{Harmonic radial 1-forms} Set $\lambda=0$.
Let $\phi^1$ be a non-trivial harmonic radial 1-form. Then $\mathcal{M}$ is harmonic
at the point $P$. Since $\phi^0_{0,0}=1$, Theorem~\ref{T1.4} shows
there are no log terms in $\phi^1$. If we solve
the equation $\partial_r\phi^1+\Xi_P\phi^1=0$, then $\phi^1$ is harmonic.
Let $\mathcal{M}$ be a geodesically complete harmonic space.
If the scalar curvature $\tau$ vanishes, then $\mathcal{M}$ is flat. If $\tau>0$,
then $M$ is a rank one symmetric space. We have for suitably chosen $(m,k)$ that
\begin{eqnarray*}
&&\Theta=\sin(r)^{m-1}\cos(r)^k,\quad\Xi=(m-1)\cot(r)-k\tan(r),\\
&&\psi(r)dr=\cos(r)^{-k}\sin(r)^{1-m}dr\in\mathfrak{H}^1\,.
\end{eqnarray*}
The only known examples if $\tau<0$ are the Damek Ricci spaces and the negative
curvature rank one symmetric spaces. There we have
\begin{eqnarray*}
&&\Theta=\sinh(r)^{m-1}\cosh(r)^k,\quad\Xi=(m-1)\coth(r)-k\tanh(r),\\
&&\psi(r)dr=\cosh(r)^{-k}\sinh(r)^{1-m}dr\in\mathfrak{H}^1\,.
\end{eqnarray*}
These solutions blow up at the origin;
we were not able to find the corresponding regular solutions in $\mathfrak{H}^1$ in closed form.
 There are no known examples of harmonic spaces which do not fall under this rubric. However, we will
 show presently that there are many manifolds which are harmonic with respect to a point and to
 which Theorem~\ref{T1.4} applies.

\subsection{Constructing manifolds which are harmonic with respect to a point}\label{S1.7}
As noted previously,
Copson and Ruse~\cite{CR40} showed that the conformally flat metric
\begin{equation}\label{E1.f}
ds^2=e^{2\phi(\|x\|^2)}((dx^1)^2+\dots+(dx^m)^2)\text{ for }\phi\in C^\infty
\end{equation}
was harmonic at the origin but, for generic $\phi$, was not harmonic at other points. We now
generalize this formalism. We work locally.

\begin{theorem}\label{T1.6} Let $\mathcal{M}=(M,g)$ be the germ of a Riemannian manifold which is harmonic at the point $P$.
Let $\phi\in C^\infty(\mathbb{R})$ be a smooth even function. Let $g_\phi:=e^{2\phi(r)}g$ be a conformal radial
deformation of $g$. Then $g_\phi$ is harmonic at the point $P$.
\end{theorem}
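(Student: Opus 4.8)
The plan is to reduce everything to the characterization of harmonicity in Theorem~\ref{T1.3}(1): it suffices to produce $g_\phi$-geodesic polar coordinates in which the volume density $\Theta_P^{g_\phi}$ depends only on the radial variable. The whole argument hinges on the fact that a conformal factor depending only on the $g$-distance $r=r_P$ does not disturb the angular structure, so the new geodesic distance is merely a radial reparametrization of the old one.

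First I would record that $g_\phi$ is a smooth metric: writing the even function as $\phi(r)=\psi(r^2)$ with $\psi$ smooth and using $r^2=(x^1)^2+\cdots+(x^m)^2$ in geodesic normal coordinates shows $e^{2\phi(r)}$ is smooth across $P$; this is precisely where evenness is needed. Next comes the crucial step of identifying the $g_\phi$-distance to $P$. Set $\rho(r):=\int_0^r e^{\phi(s)}\,ds$, a direction-independent monotone reparametrization (direction-independent exactly because $\phi$ depends only on $r$). Since $|dr|_g=1$, the conformal scaling gives
\[
|d\rho|_{g_\phi}^2=e^{-2\phi}|d\rho|_g^2=e^{-2\phi}\,(\rho'(r))^2\,|dr|_g^2=e^{-2\phi}e^{2\phi}=1,
\]
so $\rho$ solves the eikonal equation for $g_\phi$, vanishes at $P$, and has $\nabla^{g_\phi}\rho=e^{-\phi}\partial_r$ pointing radially outward. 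Hence $r_P^{g_\phi}=\rho(r_P^g)$ near $P$, the $g_\phi$-geodesic spheres coincide with the $g$-geodesic spheres, and $(\rho,\theta)$ with the same angular variable $\theta$ are valid $g_\phi$-geodesic polar coordinates.

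It then remains to transform the volume form. From $\dvol_{g_\phi}=e^{m\phi(r)}\dvol_g$ and $dr=e^{-\phi(r)}\,d\rho$, starting from $\dvol_g=\Theta_P^g(r,\theta)\,dr\wedge\dvol_{\vec\theta}$ one obtains
\[
\dvol_{g_\phi}=e^{(m-1)\phi(r)}\,\Theta_P^g(r,\theta)\,d\rho\wedge\dvol_{\vec\theta},
\qquad\text{so}\qquad
\Theta_P^{g_\phi}(\rho,\theta)=e^{(m-1)\phi(r(\rho))}\,\Theta_P^g(r(\rho),\theta).
\]
Because $g$ is harmonic at $P$, Theorem~\ref{T1.3}(1) gives $\Theta_P^g(r,\theta)=\Theta_P^g(r)$, and since $\phi$ and $r(\rho)$ are functions of $\rho$ alone, the right-hand side is independent of $\theta$. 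Invoking Theorem~\ref{T1.3}(1) once more shows $g_\phi$ is harmonic at $P$. I expect the only real obstacle to be the identification of the new distance function: once the eikonal computation shows that the radial reparametrization is the same in every direction, the matching of angular coordinates and the volume bookkeeping are routine. (As an alternative to the eikonal route, one may verify directly from the conformal transformation law $\tilde\nabla_XY=\nabla_XY+(Xf)Y+(Yf)X-g(X,Y)\nabla f$, with $f=\phi(r)$ and $\nabla f=\phi'(r)\partial_r$ radial, that $\tilde\nabla_{\partial_r}\partial_r=\phi'(r)\partial_r$, so the radial $g$-geodesics are $g_\phi$-pregeodesics; this yields the same reparametrization.)
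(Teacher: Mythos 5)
Your proposal is correct and takes essentially the same route as the paper: both define the new radial variable $\rr(r)=\int_0^r e^{\phi(s)}\,ds$, identify it as the $g_\phi$-distance to $P$ (the paper by writing $g_\phi=d\rr^2+e^{2\phi}g_{ab}d\theta^ad\theta^b$ and invoking the polar form, you by the eikonal equation), and arrive at the same density identity $\Theta_{P,g_\phi}(\rr,\theta)=e^{(m-1)\phi(r(\rr))}\Theta_{P,g}(r(\rr),\theta)$, which is Equation~(\ref{E4.a}). Your additional remarks on smoothness of $e^{2\phi(r)}$ across $P$ via evenness and on radial pregeodesics are harmless refinements of the same argument.
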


If we choose as our base manifold $\mathcal{M}$ to be harmonic space which is not locally conformally flat (for example
complex projective space, quaternionic projective space, or the Cayley plane), then $\mathcal{M}$
is not locally conformally flat and hence the metric of Theorem~\ref{T1.6} does not arise from
Equation~(\ref{E1.f}).
The volume density determines the underlying geometry of a harmonic space in many instances.
For example, it is known that a harmonic space with the same volume density as $\mathbb R^n$ is flat.
This is not the case if $\mathcal{M}$ is only assumed to be harmonic at the point $P$.

\begin{theorem}\label{T1.7} If $m\ge4$ is even, there exists a Riemannian manifold $\mathcal{M}$
of dimension $m$
which is harmonic at a point $P$, which has $\Theta_P=r^{m-1}$, which
is diffeomorphic to $\mathbb{R}^m$, and which is not conformally flat.
\end{theorem}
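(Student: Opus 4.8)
The plan is to realize the desired example as a radial conformal deformation of a harmonic space that is already not conformally flat and diffeomorphic to $\mathbb{R}^m$. Since $m\ge4$ is even, complex hyperbolic space $\mathcal{N}=\mathbb{CH}^{m/2}$ is available: being a rank one symmetric space it is a harmonic space; it is diffeomorphic to $\mathbb{R}^m$ and has infinite injectivity radius (Cartan--Hadamard); and for $m/2\ge2$ its Weyl tensor is non-zero everywhere, so it is not conformally flat. Let $g$ be its metric, let $P$ be any point, and let $r=r_P$ be the $g$-geodesic distance to $P$. I will select an even $\phi\in C^\infty(\mathbb{R})$ and set $g_\phi=e^{2\phi(r)}g$. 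By Theorem~\ref{T1.6}, $g_\phi$ is then harmonic at $P$; since $g_\phi$ is conformal to $g$ and vanishing of the Weyl tensor is a pointwise conformal invariant, $g_\phi$ is again not conformally flat; and $g_\phi$ is a smooth metric on the same underlying manifold, which is diffeomorphic to $\mathbb{R}^m$. It remains only to choose $\phi$ so that the volume density of $g_\phi$ in its own geodesic polar coordinates is exactly $r^{m-1}$.

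The engine of the argument is the transformation law for the density under such a deformation. Because $\nabla\phi(r)=\phi'(r)\partial_r$ is radial, the radial $g$-geodesics emanating from $P$ are, up to reparametrisation, also $g_\phi$-geodesics, and since $\phi(0)=0$ the inner product at $P$ is unchanged, so the unit sphere at $P$ is the same. Hence $(\tilde r,\theta)$ are geodesic polar coordinates for $g_\phi$, where $\theta$ is the old angular variable and $\tilde r=F(r):=\int_0^r e^{\phi(s)}\,ds$ is $g_\phi$-arclength along the radial geodesics. Combining $\dvol_{g_\phi}=e^{m\phi(r)}\dvol_g$ with $\dvol_g=\Theta_P^g(r)\,dr\,\dvol_{\vec\theta}$ (here $\Theta_P^g$ is radial because $g$ is harmonic at $P$) and $dr=e^{-\phi}\,d\tilde r$, one finds
\[
\Theta_P^{g_\phi}(\tilde r)=e^{(m-1)\phi(r)}\,\Theta_P^g(r),\qquad r=F^{-1}(\tilde r),
\]
which is manifestly independent of $\theta$, reconfirming harmonicity at $P$.

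To force $\Theta_P^{g_\phi}(\tilde r)=\tilde r^{\,m-1}$, write $\rho(r):=(\Theta_P^g(r))^{1/(m-1)}$ and impose $e^{(m-1)\phi(r)}\Theta_P^g(r)=F(r)^{m-1}$. Taking $(m-1)$-th roots and using $e^{\phi}=F'$ collapses this to the first-order linear ODE $F'(r)\rho(r)=F(r)$, that is $(\log F)'=1/\rho(r)$. Since $g$ is harmonic at $P$ one has $\Theta_P^g(r)=r^{m-1}\tilde\Theta_P(r)$ with $\tilde\Theta_P$ smooth, even, and equal to $1$ at $r=0$; concretely for $\mathbb{CH}^{m/2}$ one has $\Theta_P^g(r)=\sinh(r)^{m-1}\cosh(r)$, so $\rho(r)=r\,\psi(r)$ with $\psi$ smooth, even, positive and $\psi(0)=1$. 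Writing $1/\psi=1+w$ with $w$ even, smooth and $w=O(r^2)$, integration gives $\log F(r)=\log r+W(r)$ with $W(r)=\int_0^r w(s)/s\,ds$ even, smooth, and $W(0)=0$; hence $F(r)=r\,e^{W(r)}$, so $F(0)=0$ and $F'(0)=1$. Setting $\phi:=\log F'$ then produces a function that is smooth and even near $r=0$ with $\phi(0)=0$, exactly the hypotheses needed for Theorem~\ref{T1.6}; moreover $F'=F/\rho>0$ and $\rho>0$ for all $r>0$, and $\iota_P=\infty$ for $\mathbb{CH}^{m/2}$, so the ODE integrates over all of $(0,\infty)$ and $g_\phi$ is a global metric on $\mathbb{R}^m$. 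Renaming the new radial variable $\tilde r$ as $r$, the conclusion $\Theta_P=r^{m-1}$ holds.

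The step requiring the most care is the regularity at the origin: everything hinges on verifying that $\rho(r)/r=\psi(r)$ is a smooth, even, positive function with $\psi(0)=1$, so that $W$, and hence $\phi$, extends smoothly and evenly across $r=0$. This is precisely what guarantees simultaneously that $g_\phi$ is a smooth metric at $P$ and that Theorem~\ref{T1.6} applies. Note that no genuine harmonic space can have density $r^{m-1}$ without being flat, so the output is necessarily harmonic only at $P$ and not a harmonic space, consistent with the assertion; and any harmonic space of dimension $m$ that is diffeomorphic to $\mathbb{R}^m$ and not conformally flat (for instance a suitable Damek--Ricci space) would serve equally well in place of $\mathbb{CH}^{m/2}$.
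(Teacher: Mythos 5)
Your proposal is correct and follows the same strategy as the paper: apply the radial conformal deformation of Theorem~\ref{T1.6} to a harmonic space that is not conformally flat and is diffeomorphic to $\mathbb{R}^m$, then choose the conformal factor to normalize the volume density. The only real differences are in the details. The paper uses $\mathbb{CP}^{m/2}$ with its cut locus removed (an open geodesic ball of radius $\frac\pi2$) where you use $\mathbb{CH}^{m/2}$; either choice works. More substantively, the paper prescribes $e^{(m-1)\phi(r)}\tilde\Theta_{P,g}(r)=1$, which via Equation~(\ref{E4.a}) yields $\Theta_{P,g_\phi}(\mathfrak{r})=r(\mathfrak{r})^{m-1}$ in the \emph{new} radial coordinate $\mathfrak{r}$ rather than $\mathfrak{r}^{m-1}$ on the nose; your normalization, imposing $e^{(m-1)\phi(r)}\Theta_{P,g}(r)=F(r)^{m-1}$ with $F(r)=\int_0^re^{\phi(s)}\,ds$ and collapsing it to the first-order ODE $F'\rho=F$ for $\rho=\Theta_{P,g}^{1/(m-1)}$, is the one that actually delivers the stated conclusion. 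Your verification that the resulting $\phi$ extends smoothly and evenly across $r=0$ (via $\rho(r)=r\psi(r)$ with $\psi$ smooth, even, positive, $\psi(0)=1$) is precisely the regularity needed to invoke Theorem~\ref{T1.6}, so the argument is complete.
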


\section{Proof of Theorem~\ref{T1.3}}
\subsection{Linear ordinary differential equations}
We recall the following well known result in the theory of ordinary differential equations.

\begin{theorem}\label{T2.1}
Let $a(r)$ and $b(r)$ be smooth functions on an interval $[a,b]$.
Given any $r_0\in[a,b]$
and given $(u,v)\in\mathbb{R}^2$,
there exists a unique smooth solution to the equation $\partial_r^2\phi(r)+a(r)\partial_r\phi(r)+b(r)\phi(r)=0$
with
$\phi(r_0)=u$ and $\partial_r\phi(r_0)=v$.
If $a$ and $b$ are real analytic, then $\phi$ is real analytic.
\end{theorem}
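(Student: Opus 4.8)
The plan is to rewrite the scalar second-order equation as a first-order linear system and then apply the standard existence, uniqueness, and regularity theory for such systems. Write $I:=[a,b]$ and set $\Phi:=(\phi,\partial_r\phi)^{\mathrm T}$. The equation $\partial_r^2\phi+a\,\partial_r\phi+b\,\phi=0$ with $\phi(r_0)=u$ and $\partial_r\phi(r_0)=v$ is then equivalent to the initial value problem
$$\partial_r\Phi=A(r)\Phi,\qquad A(r):=\begin{pmatrix}0&1\\-b(r)&-a(r)\end{pmatrix},\qquad\Phi(r_0)=(u,v)^{\mathrm T}.$$
Since $a,b\in C^\infty(I)$ are continuous on the compact interval $I$, we have $\|A(r)\|\le M$ for some constant $M$, and I would realize the solution as the unique fixed point of the integral operator $(T\Phi)(r):=\Phi(r_0)+\int_{r_0}^rA(s)\Phi(s)\,ds$ acting on continuous $\mathbb{R}^2$-valued functions on $I$. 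Equipping that space with the weighted norm $\|\Phi\|:=\sup_{r\in I}e^{-L|r-r_0|}|\Phi(r)|$ for $L>M$ makes $T$ a contraction on all of $I$ at once; the Banach fixed-point theorem then yields existence and uniqueness simultaneously, and because the system is linear there is no finite-time blow-up, so the solution is defined on the whole of $I$.

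The solution produced this way is a priori only continuous, and I would promote it to a smooth one by bootstrapping. The relation $\partial_r\Phi=A\Phi$ shows that if $\Phi\in C^k(I)$, then its derivative $A\Phi$ again lies in $C^k(I)$ because $A\in C^\infty(I)$, whence $\Phi\in C^{k+1}(I)$. Starting from $\Phi\in C^0(I)$ and iterating gives $\Phi\in C^\infty(I)$, and in particular the first component $\phi$ is the desired smooth solution.

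For the analytic assertion, suppose in addition that $a,b\in C^\omega(I)$, so that $A$ is real analytic. Fixing $r_1\in I$, I would look for a power-series solution $\Phi(r)=\sum_{n\ge0}c_n(r-r_1)^n$; inserting this together with $A(r)=\sum_{k\ge0}A_k(r-r_1)^k$ into $\partial_r\Phi=A\Phi$ produces the recursion $(n+1)c_{n+1}=\sum_{j=0}^nA_jc_{n-j}$, which determines all the $c_n$ from $c_0=\Phi(r_1)$. The crux of the matter is not writing down this formal series but showing that it converges, and here I expect the \emph{method of majorants} to be the main technical point: analyticity of $A$ gives Cauchy estimates $\|A_k\|\le C\rho^{-k}$ for some $\rho>0$, and comparing the recursion with the one generated by the geometric majorant $\sum_kC\rho^{-k}(r-r_1)^k$ shows that the formal series has a positive radius of convergence. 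The resulting analytic function solves the equation, so by the uniqueness already established it coincides with $\Phi$ near $r_1$; since $I$ is compact one obtains a uniform radius of convergence, and therefore $\phi$ is real analytic on all of $I$.
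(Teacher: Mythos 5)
The paper does not prove this statement at all: it is quoted as ``well known'' background from the theory of ordinary differential equations, so there is no internal argument to compare against. Your proof is the standard and correct one --- reduction to the first-order system $\partial_r\Phi=A\Phi$, global existence and uniqueness via a Bielecki-weighted contraction, bootstrapping $\Phi\in C^k\Rightarrow\Phi\in C^{k+1}$ for smoothness, and the majorant method plus uniqueness for the real analytic case --- and it fills the gap the authors deliberately left to the literature.
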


\subsection{Harmonic radial functions} We use Theorem~\ref{T2.1}
to prove Theorem~\ref{T1.3}~(1).
It is immediate that $\Theta_P$ is radial implies $\Xi_P$ is radial. Conversely, suppose $\Xi_P$ is radial or,
equivalently, $\tilde\Xi_P$ is radial. Then $\tilde\Theta_P(r,\theta)$ satisfies the ODE
$$
\partial_r\tilde\Theta_P(r,\theta)-\tilde\Xi_P(r)\tilde\Theta_P(r,\theta)=0\text{ with }\tilde\Theta_P(0,\theta)=1\,.
$$
Since the coefficients of the ODE and
the initial condition are independent of $\theta$, $\tilde\Theta_P(r,\theta)$ is independent of $\theta$.
Consequently, Assertions (1a) and (1b) are equivalent.
Suppose that Assertion~(1b) holds so $\Xi_P(r,\theta)=\Xi_P(r)$ is independent of $\theta$ and Equation~(\ref{E1.c})
becomes an ODE.
By Theorem~\ref{T2.1}, the space of solutions to a second order ODE is 2-dimensional. Thus Assertion~(1b)
implies Assertion~(1c). It is immediate that Assertion~(1c) implies Assertion~(1d).
Suppose that Assertion~(1d) holds.
The constant function $\text{\bf1}$ belongs to $\mathfrak{H}_P^0$. Suppose $\dim\{\mathfrak{H}_P^0\}\ge2$.
Then there exists a non-constant harmonic radial function $\phi_P^0$.
Fix $\theta$. Then $\phi_P^0$ is determined by $\phi_P^0(r_0)$ and $\partial_r\phi_P^0(r_0)$ for any $r_0$.
Suppose there exists $r_0$ so $\partial_r\phi_P^0(r_0)=0$. Set $\psi_P^0(r)=\phi_P^0(r)-\phi_P ^0(r_0)\text{\bf1}$; this is harmonic.
Since $\psi_P^0(r_0)=0$ and $\partial_r\psi_P^0(r_0)=0$, $\psi_P^0$ vanishes identically.
This implies $\phi_P^0$ is constant which is false.
Thus $\partial_r\phi_P^0$ never vanishes and $\Xi_P(r,\theta)=-\{\partial_r\phi_P^0(r)\}^{-1}\partial_r^2\phi_P^0(r)$
is independent of $\theta$. This shows that Assertion~(1d) implies Assertion~(1b).

Suppose that Assertion~(1b) holds. Then Equation~(\ref{E1.d}) becomes a second order ODE and,
by Theorem~\ref{T2.1}, the solution space has dimension 2. This shows that Assertion~(1b) implies Assertion~(1e).
Clearly Assertion~(1e) implies Assertion~(1f). Suppose that Assertion~(1f) holds. Let $\phi^1$ be a non-trivial solution to Equation~(\ref{E1.d}).
Fix $\theta_0$ and $r_0$.
Suppose there exists a sequence $r_n\rightarrow r_0$ so $\phi^1(r_n)=0$. By the mean value property, there
exist points $s_n$ between $r_n$ and $r_{n+1}$ so $\partial_r\phi^1(s_n)=0$. This implies that
$\phi^1(r_0)=\partial_r\phi^1(r_0)=0$.
Because the function $\phi^1$ solves the ODE
$\partial_r^2\phi^1(r)+\Xi_P(r,\theta_0)\partial_r\phi^1(r)+(\partial_r\Xi_P(r,\theta_0))\phi^1(r)=0$,
we may use Theorem~\ref{T2.1} to see $\phi^1$
vanishes identically which is false. Thus we can find a sequence $r_n\rightarrow r_0$ so
$\phi^1(r_n)\ne0$. The equation $\phi^1(r_n)d_\theta\tilde\Xi_P(r_n,\theta_0)=0$
implies $d_\theta\Xi_P(r_n,\theta)=0$ and hence by continuity $d_\theta\Xi_P(r_0,\theta)=0$.
Since $\theta_0$ was arbitrary, this shows that
$\Xi_P(r_0,\theta)$
is independent of $\theta$ so Assertion~(1b) holds.~\qed

\subsection{The proof of Theorem~\ref{T1.3}~(2)}
If $\phi$ is a radial eigen-function, then Equation~(\ref{E1.c}) yields, after
taking into account the sign convention for $\Delta^0$,
\begin{equation}\label{E2.a}
\partial_r^2\phi^0(r)+\left(\frac{m-1}r+\tilde\Xi_P(r,\theta)\right)\partial_r\phi^0(r)+\lambda\phi^0(r)=0\,.
\end{equation}
Suppose $\Xi_p(r,\theta)=\Xi_p(r)$. Then Equation~(\ref{E2.a}) is a second order ODE and Theorem~\ref{T2.1}
yields $\dim\{\mathfrak{E}^0_P(\lambda)\}=2$. Thus Assertion~(2a) implies Assertion~(2b). It
is immediate that Assertion~(2b) implies Assertion~(2c). Suppose Assertion~(2c) holds. Let
$r_0\in(0,\iota_P)$ and let $\phi^0$ be a non-trivial solution to Equation~(\ref{E2.a}). Suppose
$\partial_r\phi^0(r_0)\ne0$. Subtracting Equation~(\ref{E2.a}) at two values of $\theta$ shows
$(\tilde\Xi_P(r_0,\theta_1)-\tilde\Xi_P(r_0,\theta_0))\partial_r\phi^0(r_0)=0$ so
$\tilde\Xi_P(r_0,\theta_1)(r_0)=\tilde\Xi(r_0,\theta_0)(r_0)$. Next, suppose $\partial_r\phi^0(r_0)=0$ but
there exist a sequence of points $r_n\rightarrow r_0$ so $\partial_r\phi^0(r_n)\ne0$.
Since $\tilde\Xi_P(r_0,\theta_1)(r_n)=\tilde\Xi(r_0,\theta_0)(r_n)$,
$\tilde\Xi_P(r_0,\theta_1)(r_0)=\tilde\Xi(r_0,\theta_0)(r_0)$ by continuity. Suppose
$\partial_r\phi^0$ vanishes identically near $r_0$. We then have $\partial_r^2\phi^0$ vanishes
identically near $r_0$ and hence, since $\lambda\ne0$, $\phi^0$ vanishes identically near $r_0$.
Since $\partial_r\phi^0(r_0)=\phi^0(r^0)=0$, Theorem~\ref{T2.1} shows $\phi^0$ vanishes identically
on $(0,\iota_P)$ which is false. This contradiction shows Assertion~(2c) implies Assertion~(2a).
Since $\lambda\ne0$, $d:\mathfrak{E}^0_P(\lambda)\rightarrow\mathfrak{E}^1_P(\lambda)$
is an isomorphism so Assertion~(2b) is equivalent to Assertion~(2d) and Assertion~(2c) is equivalent
to Assertion~(2e).~\qed

\subsection{Proof of Theorem~\ref{T1.3}~(3)}
Since $\mathcal{M}$ is Einstein, $\tilde{\mathfrak{H}}^1_P=\mathfrak{E}_P^1(\lambda)$ where
$\lambda$ is the Einstein constant. If $\lambda=0$, then Theorem~\ref{T1.3}~(3) follows from
Theorem~\ref{T1.3}~(1). If $\lambda\ne0$, then Theorem~\ref{T1.3}~(3) follows from
Theorem~\ref{T1.3}~(2).~\qed

\section{The proof of Theorem~\ref{T1.4}}
We begin by examining the eigenvalue equation for $\Delta^0$. Set
\begin{equation}\label{E3.a}
\mathcal{P}\psi:=\partial_r^2\psi+\left(\frac{m-1}r+\tilde\Xi_P(r)\right)\partial_r\psi+\lambda\psi\,.
\end{equation}

\begin{lemma}\label{L3.1}
Assume that $\tilde\Xi_P\in C^\omega\Gronk$.
Let $\eta\in C^\omega\Gronk$. Then there exists $\psi\in C^\omega\Gronk$ with
$\psi(0)=1$, $\partial_r\psi(0)=0$, and $\mathcal{P}\psi=\eta$.
\end{lemma}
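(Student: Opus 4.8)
The plan is to solve the singular ODE $\mathcal{P}\psi=\eta$ by the method of power series at the regular singular point $r=0$, and then to propagate the resulting germ across the whole interval. First I would note that $r=0$ is a regular singular point: clearing the denominator puts the equation in the Fuchsian form
$$r^2\psi''+r\big((m-1)+r\tilde\Xi_P(r)\big)\psi'+\lambda r^2\psi=r^2\eta,$$
whose indicial polynomial is $s(s-1)+(m-1)s=s(s+m-2)$, with roots $s=0$ and $s=2-m$. Since we want the solution attached to the larger root $s=0$, no logarithmic term will intervene and the candidate solution will be genuinely analytic; the root $2-m$ is exactly the one responsible for the singular companion solution $\phi^1_{1,\lambda}$ and plays no role here.

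Next I would expand the data in power series. Write $\tilde\Xi_P(r)=\sum_{i\ge1}c_i r^i$ (the series starts at $i=1$ because $\tilde\Xi_P$ vanishes to first order at $0$) and $\eta(r)=\sum_{k\ge0}b_k r^k$, and seek $\psi(r)=\sum_{n\ge0}a_n r^n$. Collecting the Euler part $\partial_r^2+\frac{m-1}r\partial_r$, matching coefficients of $r^k$ in $\mathcal{P}\psi=\eta$ gives the recursion
$$(k+2)(k+m)\,a_{k+2}=b_k-\lambda a_k-\sum_{n=1}^{k}n\,c_{k+1-n}\,a_n,\qquad k\ge0,$$
while the coefficient of $r^{-1}$ yields the single constraint $(m-1)a_1=0$. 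Because $m\ge2$, the leading factor $(k+2)(k+m)$ never vanishes for $k\ge0$, so once $a_0,a_1$ are fixed every higher coefficient is uniquely determined. The constraint forces $a_1=0$ (equivalently $\partial_r\psi(0)=0$ is automatic), while $a_0$ is free, and I would set $a_0=1$ to get $\psi(0)=1$. Thus a formal solution with the required initial values exists and is unique.

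The main obstacle is the convergence of this formal series, which I would settle by a direct geometric estimate. Fixing $\rho\in(0,\iota_P)$, Cauchy's inequalities give a constant $M$ with $|b_k|\le M\rho^{-k}$ and $|c_i|\le M\rho^{-i}$, and the recursion yields
$$|a_{k+2}|\le\frac1{(k+2)(k+m)}\Big(|b_k|+|\lambda|\,|a_k|+\sum_{n=1}^{k}n\,|c_{k+1-n}|\,|a_n|\Big).$$
I would then prove by induction that $|a_n|\le KR^n$ for suitable constants $K$ and $R>\rho^{-1}$. The decisive point is that the factor $n$ produced by the derivative in $\tilde\Xi_P\partial_r\psi$ is absorbed by the quadratic growth of the denominator, since $\tfrac{n}{(k+2)(k+m)}\le\tfrac1m$; summing the resulting geometric contribution and choosing $R$ large enough (equivalently, the radius $R^{-1}$ small) closes the induction. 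Hence $\psi$ is real analytic on some interval $[0,\varepsilon)$.

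Finally I would extend $\psi$ to all of $\Gronk$. On the punctured interval $(0,\iota_P)$ the coefficient $\frac{m-1}r+\tilde\Xi_P(r)$ is real analytic and the equation is non-singular, so the real-analytic version of Theorem~\ref{T2.1} continues $\psi$ analytically along $(0,\iota_P)$; splicing this continuation with the analytic germ at $0$ produces $\psi\in C^\omega\Gronk$ satisfying $\mathcal{P}\psi=\eta$, $\psi(0)=1$, and $\partial_r\psi(0)=0$, as required.
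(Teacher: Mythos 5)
Your proposal is correct and follows essentially the same route as the paper: a formal power-series solution at the regular singular point $r=0$, the recursion $(k+2)(k+m)a_{k+2}=b_k-\lambda a_k-\sum_n n\,c_{k+1-n}a_n$ (the paper's $\psi_k=\frac{-1}{k(m+k-2)}\{\cdots\}$ after an index shift), convergence via geometric majorant bounds on the coefficients, and continuation across $(0,\iota_P)$ by the regular ODE theory. The only cosmetic differences are your Frobenius/indicial-root framing and your derivation of $a_1=0$ from the $r^{-1}$ coefficient, which the paper simply builds into the ansatz.
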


\begin{proof}
We can use power series to solve Equation~(\ref{E3.a}) away from $r=0$ where the equation
is regular and use Theorem~\ref{T1.2} to patch the solutions together. Consequently,
the crux of the matter is what happens near $r=0$. Since $\tilde\Theta_P=1+O(r^2)$,
$\tilde\Xi_P=O(r)$. We expand $\tilde\Xi_P$, $\eta$, and $\psi$ power series:
\begin{eqnarray*}
\Xi_P(r)=\sum_{k=1}^\infty\xi_kr^k,\qquad\eta(r)=
\sum_{k=0}^\infty\eta_kr^k,\quad\psi(r)=1+\sum_{k=2}^\infty\psi_kr^k\,.
\end{eqnarray*}
We proceed purely formally and expand
\medbreak\quad$\displaystyle\partial_r^2\psi+\frac{m-1}r\partial_r\psi=
\sum_{k=2}^\infty k(k+m-2)\psi_k r^{k-2}$,
\medbreak\quad$r\xi\partial_r\psi=\hspace{1.75cm}
\displaystyle\displaystyle\sum_{k=2}^\infty\ \sum_{i+j=k-2}j\xi_i\psi_jr^{k-2}$,
\medbreak\quad$\lambda\psi-\eta=\hspace{1.65cm}\displaystyle\sum_{k=2}^\infty\{\lambda\psi_{k-2}-\eta_{k-2}\}r^{k-2}$,
\medbreak\quad$\mathcal{P}(\psi)-\eta=
\displaystyle\sum_{k=2}^\infty\bigg\{k(m+k-2)\psi_k+\lambda\psi_{k-2}-\eta_{k-2}
+\sum_{i+j=k-2}j\xi_i\psi_j\bigg\}r^{k-2}$.
\medbreak\noindent In the sum $i+j=k-2$, we have $i\ge1$ and thus $j\le k-3$ so $\psi_j$ has been determined
at a previous stage; this sum is empty, of course, if $k=2$ or $k=3$.
To ensure $\mathcal{P}\psi=\eta$, we obtain the recursive relations for $k\ge2$
$$\displaystyle\psi_k=\frac{-1}{k(m+k-2)}\bigg\{\lambda\psi_{k-2}-\eta_{k-2}
+\sum_{i+j=k-2}j\xi_i\psi_j\bigg\}\,.
$$
\medbreak\noindent We must estimate $\psi_k$ to establish convergence of the series defining $\psi$. Since
$\tilde\Xi_P$ and $\xi$ are real analytic,
there exists $\kappa>1$ so
$|\xi_\nu|\le\kappa^{\nu+1}$ and $|\eta_\nu|\le\kappa^{\nu+1}$ for all $0\le\nu$.
By increasing $\kappa$ if necessary, we can also assume that $|\lambda|\le\kappa$.
We assume inductively that $|\psi_j|\le\kappa^{j+1}$ for $j\le k-1$.
We may then estimate:
\begin{eqnarray*}
|\psi_k|&\le&\frac1{k(m+k-2)}\bigg\{\kappa^k+\kappa^{k-1}+\sum_{i+j=k-2}j\,\kappa^{i+1+j+1}\bigg\}\\
&\le&\frac{\kappa^k(2+(k-1)^2)}{k^2}\le\kappa^{k+1}\,.
\end{eqnarray*}
This ensures the series defining $\psi$ converges near $r=0$.
\end{proof}

\subsection{The proof of Theorem~\ref{T1.4}~(1a)} Use Lemma~\ref{L3.1} to find $\phi_{0,\lambda}^0$
so $\phi_{0,\lambda}^0(0)=1$ and so $\mathcal{P}(\phi_{0,\lambda}^0)=0$. The expansion of
$\phi_{0,\lambda}^0$ about the origin is then a routine computation which is omitted.~\qed

\subsection{The proof of Theorem~\ref{T1.4}~(1b)}
Because $\mathcal{P}(\phi_{0,\lambda}^0)=0$, we have
\begin{eqnarray*}
\mathcal{P}(\phi_{0,\lambda}^0(r)\log(r))
&=&\mathcal{P}(\phi_{0,\lambda}^0)\cdot\log(r)+\phi_{0,\lambda}^0(r)\cdot\partial_r^2\log(r)\\
&&+2\partial_r\phi_{0,\lambda}^0(r)\cdot\partial_r\log(r)\\
&&+\phi_{0,\lambda}^0(r)((m-1)r^{-1}+\tilde\Xi_p(r))\partial_r\log(r)\,.
\end{eqnarray*}
Since $\tilde\Xi_P=O(r)$, $\phi_{0,\lambda}^0=1+O(r^2)$, and $\mathcal{P}(\phi_{0,\lambda}^0)=0$,
we may express
\begin{equation}\label{E3.b}
\mathcal{P}(\phi_{0,\lambda}^0(r)\log(r))=(m-2)r^{-2}+\eta(r)\text{ for }\eta\in C^\omega\Gronk\,.
\end{equation}

\subsubsection{Suppose $m=2$}  Apply Lemma~\ref{L3.1} to
choose $\sigma\in C^\omega\Gronk$ so $\mathcal{P}(\sigma)=-\eta$.
Lemma~\ref{T1.4}~(1b-i) follows since $\phi_{1,\lambda}^0:=\phi_{0,\lambda}^0\log(r)+\sigma$
satisfies $\mathcal{P}(\phi_{1,\lambda}^0)=0$.~\qed

\subsubsection{Suppose $m=3$}
We compute that
$\eta:=\textstyle\mathcal{P}(r^{-1}+\frac12(\xi_1-\lambda)r)\in C^\omega\Gronk$.
Use Lemma~\ref{L3.1} to find
$\sigma$ so $\mathcal{P}(\sigma)=-\eta$. Let
$\phi_{1,\lambda}^0:=r^{-1}+\frac12(\xi_1-\lambda)r+\sigma(r)$ and set $C(\mathcal{M})=0$. We then have
$\mathcal{P}(\phi_{1,\lambda}^0)=0$.~\qed

\subsubsection{Suppose $m=4$}
We compute that
$\textstyle\mathcal{P}(r^{-2}+\frac23\xi_2r)=r^{-2}(\lambda-2\xi_1)+\eta_1$ where $\eta\in C^\omega\Gronk$.
Consequently, by Equation~(\ref{E3.b}),
$$\textstyle
\mathcal{P}\big(r^{-2}+\frac1{m-2}(2\xi_1-\lambda)\phi_{0,\lambda}^0(r)\log(r)+\frac23\xi_2r\big)=\eta\in C^\omega\Gronk\,.
$$
Use Lemma~\ref{L3.1} to find
$\sigma_1$ so $\mathcal{P}(\sigma_1)=-\eta$. Set
$C(\mathcal{M}):=2\xi_1-\lambda=4\mathcal{H}_2-\lambda$ and
\smallbreak\hfill$\phi^0_{1,\lambda}:=r^{-2}+\frac1{m-2}(2\xi_1-\lambda)\phi_{0,\lambda}^0(r)\log(r)
+\frac23\xi_2r +\sigma_1$.\hfill\vphantom{.}~\qed

\subsubsection{Suppose $m\ge5$}
Let $p(r)=p_0+p_1r+\dots+p^{m-3}\text{ where }p_0=1\text{ and }p_1=0$.
We may then express
\begin{eqnarray*}
\mathcal{P}(r^{2-m}p(r))&=&\sum_{k=0}^{m-3}(2-m+k)kp_kr^{k-m}\\
&&+\sum_{i,j}(2-m+j)\xi_ip_jr^{i+j+1-m}+\lambda\sum_\ell p_\ell r^{2+\ell-m}\,.
\end{eqnarray*}
The first sum begins with $k=2$ since the term with $p_0$ does not appear and since we took $p_1=0$.
We re-index the second and third sums to express
\medbreak\quad$\displaystyle\mathcal{P}(r^{2-m}p(r))=\sum_{k=2}^{m-3}r^{k-m}
\{(2-m+k)kp_k+\lambda p_{k-2}+\sum_{i+j=k-1}(2-m+j)\xi_ip_j\}$.
\medbreak\noindent Since $i\ge 1$, $j\le k-2$ in the sum $i+j=k-1$.
For $2\le k\le m-3$, we may choose $p_k$ recursively to ensure the
coefficient of $r^{k-m}$ vanishes. We then have
$$
\mathcal{P}(r^{2-m}p(r))=C_1\cdot r^{-2}+C_2r^{-1}+O(1)\,.
$$
We use an appropriate multiple of $\psi(r)\log(r)$ to cancel the $C_1$ term
and an appropriate multiple of $r^{-1}$ to cancel the $C_2$ term.
The remainder of the argument is as given above.~\qed

\subsection{The proof of Theorem~\ref{T1.4}~(2)} We have $d:\mathfrak{E}_P^0(\lambda)\rightarrow\mathfrak{E}_P^1(\lambda)$. In particular,
since $\partial_r\phi_{1,\lambda}^0\ne0$, $\phi_{1,\lambda}^1:=\partial_r\phi_{1,\lambda}^0dr$
can be chosen as an element of a basis for $\mathfrak{E}_P^0(\lambda)$. If $\lambda\ne0$,
$d:\mathfrak{E}_P^0(\lambda)\rightarrow\mathfrak{E}_P^1(\lambda)$ is an isomorphism so
$\partial_r\phi_{0,\lambda}^0dr$ could be chosen as a basis element.
The construction above shows that $\phi_{0,\lambda}^0(r)$ is analytic in $(r,\lambda)$. We expand
$\phi_{0,0}^0=a_0\cdot 1+a_1\cdot\phi_{0,0}^1$. Since $\phi_{0,0}^1$ is singular at $r=0$,
we conclude $a_1=0$. Since $\phi_{0,0}^0(0)=1$, we conclude $\phi_{0,0}^0=1$. Consequently
$\partial_r\phi_{0,\lambda}^0|_{\lambda=0}=0$.  This implies that $\partial_r\phi_{0,\lambda}^0$
is divisible by $\lambda$ and hence we can normalize the remaining basis element to be
$-\frac m\lambda\partial_r\phi_{0,\lambda}^0$. The expansion given for $\phi_{0,\lambda}^1$
then follows from Assertion~1 and, in particular, $\phi_{0,\lambda}^1$ is non-trivial.
Since $\phi_{0,\lambda}^1$ is analytic in $\lambda$ and well
defined for $\lambda=0$, we conclude
by continuity that $\phi_{0,0}^1dr$ is harmonic.~\qed

\section{The proof of Theorem~\ref{T1.6}}
$\mathcal{M}=(M,g)$ be harmonic at the point $P$.
Let $(r,\theta)$ be geodesic polar coordinates centered at a point $P$. Choose local coordinates
$\theta=(\theta^1,\dots,\theta^{m-1})$ on
the unit sphere to express $g=dr^2+g_{ab}(r,\theta)d\theta^ad\theta^b$
and $\Theta_P(r,\theta)=\det(g_{ab}(r,\theta))^{\frac12}$. We have
$$
g_\phi=e^{2\phi(r)}g=e^{2\phi(r)}dr^2+e^{2\phi(r)}g_{ab}(r,\theta)d\theta^ad\theta^b\,.
$$
Let $\rr(r)$ satisfy $\rr(0)=0$ and $d\rr=e^{\phi(r)}dr$. The inverse function theorem then
permits us to express $r=r(\rr)$ and
$g_\phi=d\rr^2+e^{2\phi(r(\rr))}g_{ab}(r(\rr),\theta)d\theta^ad\theta^b$.
Consequently, $(\rr,\theta)\rightarrow r(\rr)\cdot\theta$ gives geodesic polar coordinates for the metric $g_\phi$ and
$\rr$ is the geodsic distance function for $g_\phi$.
Since
\begin{equation}\label{E4.a}
\Theta_{P,g_\phi}(\rr,\theta)=e^{(m-1)\phi(r(\rr))}\Theta_{P,g}(r(\rr))\,,
\end{equation}
we conclude that $g_\phi$ is harmonic at the point $P$ as well.~\qed

\section{The proof of Theorem~\ref{T1.7}} Let $m=2\mathfrak{m}\ge4$.
Then $\mathbb{CP}^{\mathfrak{m}}$ is not conformally flat.
Let $\mathcal{M}:=(\mathbb{CP}^{\mathfrak{m}}-\mathbb{CP}^{\mathfrak{m}-1},g)$ where $g$
is the Fubini-Study metric. We have removed the cut-locus
and consequently, the underlying manifold is an open geodesic ball of radius $\frac\pi2$.
Choose $\phi$ so $e^{(m-1)\phi(r)}\tilde\Theta_{P,g}(r)=1$.
Then Equation~(\ref{E4.a}) ensures $\tilde\Theta_{P,g_\phi}=1$.~\qed
\section*{Acknowledgement} This work was supported by the National Research Foundation of Korea (NRF) grant
funded by the Korea government (MSIT) (NRF-2019R1A2C1083957) and by Project MTM2016-75897-P (AEI/FEDER, UE).
The second author is grateful to Korea Institute for Advanced Study for {their} hospitality.

\end{document}